\definecolor{Red}{rgb}{1,0,0}
\definecolor{Blue}{rgb}{0,0,1}
\definecolor{Olive}{rgb}{0.41,0.55,0.13}
\definecolor{Green}{rgb}{0,1,0}
\definecolor{MGreen}{rgb}{0,0.8,0}
\definecolor{DGreen}{rgb}{0,0.55,0}
\definecolor{Yellow}{rgb}{1,1,0}
\definecolor{Cyan}{rgb}{0,1,1}
\definecolor{Magenta}{rgb}{1,0,1}
\definecolor{Orange}{rgb}{1,.5,0}
\definecolor{Violet}{rgb}{.5,0,.5}
\definecolor{Purple}{rgb}{.75,0,.25}
\definecolor{Brown}{rgb}{.75,.5,.25}
\definecolor{Grey}{rgb}{.5,.5,.5}
\definecolor{Black}{rgb}{0,0,0}
\newcommand{\bcal}{\mathcal{B}}
\newcommand{\dcal}{\mathcal{D}}
\newcommand{\rcal}{\mathcal{R}}
\newcommand{\tcal}{\mathcal{T}}
\newcommand{\ucal}{\mathcal{U}}
\newcommand{\vcal}{\mathcal{V}}
\newcommand{\ycal}{\mathcal{Y}}
\newcommand{\real}{\mathbb{R}}
\newcommand{\eps}{\varepsilon}
\newcommand{\ind}{\mathbbm{1}}
\newcommand{\bdm}{\begin{displaymath}}
\newcommand{\edm}{\end{displaymath}}
\newcommand{\bea}{\begin{eqnarray*}}
\newcommand{\eea}{\end{eqnarray*}}
\newcommand{\bean}{\begin{eqnarray}}
\newcommand{\eean}{\end{eqnarray}}
\newcommand{\bfx}{\mathbf{x}}
\newcommand{\bfy}{\mathbf{y}}
\newcommand{\prob}{\mathbb{P}}
\newcommand{\expec}{\mathbb{E}}
\newcommand{\E}{\mathbb{E}}
\renewcommand{\P}{\mathbb{P}}
\newcommand{\var}{\mathrm{Var}}
\newtheorem{theorem}{Theorem}
\newtheorem{proposition}{Proposition}
\newtheorem{definition}{Definition}
\newtheorem{example}{Example}
\newtheorem{lemma}{Lemma}
\newtheorem{remark}{Remark}
\newtheorem{assumption}{Assumption}
\newenvironment{proof}{\noindent{\textbf{Proof:}}}{$\blacksquare$\vskip\belowdisplayskip}
\newcommand{\itemname}[1]{$\mathrm{[#1]}$}
\newcommand{\weight}{\mu}
\newcommand{\eweight}{\hat \weight}
\newcommand{\distd}{\hat d}
\newcommand{\dist}{d}
\newcommand{\underm}{\underline{M}}
\newcommand{\overm}{\overline{M}}
\newcommand{\nsqneq}{[n]^2_{\neq}}
\newcommand{\nfourneq}{[n]^4_{\neq}}
\newcommand{\path}{\mathrm{Path}}
\newcommand{\mgf}{\Phi}
\newcommand{\maxd}{M}
\newcommand{\assump}{\mathrm{A}}
\newcommand{\gpmassump}{\mathrm{GPM}}
\newcommand{\maxlambda}{\overline{\lambda}}
\newcommand{\minlambda}{\underline{\lambda}}
\newcommand{\maxuu}{\overline{U}}
\newcommand{\minuu}{\underline{U}}
\newcommand{\bin}{\widehat{B}}
\newcommand{\short}{\text{\L}^-_2}
\newcommand{\notshort}{\text{\L}^+_2}
\begin{document}

\title{\vspace{0cm}
Identifiability and inference 
of non-parametric
rates-across-sites models 
on large-scale phylogenies
\footnote{
Keywords: phylogenetic reconstruction, 
rates-across-sites models, 
concentration of measure.
}
}

\author{
Elchanan Mossel\footnote{
U.C. Berkeley and Weizmann Institute of Science. Supported by DMS
0548249 (CAREER) award, by DOD ONR grant N000141110140, by ISF
grant 1300/08 and by ERC PIRG04-GA-2008-239137 grant.}
\and
Sebastien Roch\footnote{Department of Mathematics
and Bioinformatics Program, UCLA.
Work supported by NSF grant DMS-1007144.}
}
\maketitle

\begin{abstract}
Mutation rate variation across loci 
is well known to cause difficulties, notably
identifiability issues, 
in the reconstruction of evolutionary trees
from molecular sequences. 
Here we introduce a new approach for 
estimating general rates-across-sites 
models.
Our results imply, in particular, that large phylogenies
are typically identifiable under rate variation.
We also derive sequence-length requirements for
high-probability reconstruction.

Our main contribution is a novel algorithm that clusters 
sites according to their mutation rate. Following this site clustering step, standard reconstruction techniques can be used to recover the phylogeny.
Our results rely on a basic insight: 
that, for large trees, certain site statistics
experience concen\-tra\-tion-of-measure phenomena.
\end{abstract}

\thispagestyle{empty}

\clearpage

\section{Introduction}\label{section:introduction}

The evolutionary history of living organisms is
typically represented graphically by a {\em phylogeny},
a tree whose branchings indicate past speciation events.
The inference of phylogenies 
based on molecular sequences 
extracted from extant species is a major task of computational biology. 
Among the many biological phenomena 
that complicate this
task, one that has received much attention
in the statistical phylogenetics literature
is the variation in mutation rate
across sites in a genome.
(See related work below.)
Such variation is generally attributed
to unequal degrees of selective pressure.
As we describe formally below, mathematically
this phenomenon can be modeled as a {\em mixture} 
of phylogenies. That is, interpreting branch length as a measure of the amount of evolutionary change,
rates-across-sites (RAS) models posit that
all sites in a genome evolve according to a common 
tree topology, but branch lengths for a given site are scaled
by a random factor.

Here we introduce a new approach
for estimating RAS models.
Our main contribution is a novel algorithm
which {\em clusters} the sites according to their 
mutation rate.
We show that our technique may be used to reconstruct phylogenies. Indeed, following the site clustering step, standard reconstruction techniques
can be employed to recover a phylogeny on the 
unmixed subset of sites obtained.
Our results rely on the following basic insight: there exist simple site-wise statistics
that experience con\-cen\-tra\-tion-of-measure phenomena.
Consequently, our techniques only hold {\em in the large-tree limit}.

Concentration has been used extensively in statistical phylogenetics.
However its typical use is in the
\emph{large-sample limit}, that is, as the sequence length grows to infinity, for instance
in order to show that so-called evolutionary distance estimates are accurate given sufficiently
long sequences
(see e.g.~\cite{ErStSzWa:99a}).
Instead, we consider here concentration
in what we call the \emph{large-tree limit},
that is, as the number of leaves goes to infinity.
Note that the latter is trickier to analyze.
Indeed, whereas different sites are usually assumed 
to evolve independently,
leaf states are {\em not} independent.
To the best of our knowledge, this is the first use of this type of concentration in the context
of phylogenetics.

Our results imply, in particular, that large phylogenies
are typically identifiable under rate variation.
We also derive sequence-length requirements for
high-probability reconstruction.

\subsection{Related work}\label{section:related}

Most prior theoretical work on mixture models 
has focused on the question of {\em identifiability}. 
A class of phylogenetic models is identifiable 
if any two models in the class produce different 
data distributions.
It is well-known that unmixed phylogenetic models 
are typically identifiable~\cite{Chang:96}.
{\em This is not the case in general for mixtures of phylogenies.}
For instance, Steel et al.~\cite{StSzHe:94} showed that for any two trees
one can find a random scaling on each of them such that their data distributions
are identical. Hence it is hopeless in general to reconstruct
phylogenies under mixture models.
See also~\cite{EvansWarnow:04,MatsenSteel:07,MaMoSt:08,StefankovicVigoda:07a,StefankovicVigoda:07b,Steel:09} for further examples of this type. 

However the negative examples constructed in the references above are not necessarily typical. 
They use special features of the mutation models (and their invariants) 
and allow themselves quite a bit of flexibility in setting up the topologies and branch lengths.
In fact, recently a variety of more standard mixture models have been shown to be
identifiable. These include the common GTR+Gamma model~\cite{AlAnRh:08,WuSusko:10} and
GTR+Gamma+I model~\cite{ChaiHousworth:11},
as well as some covarion models~\cite{AllmanRhodes:06},
some group-based models~\cite{AlPeRhSu:11},
and so-called $r$-component identical tree mixtures~\cite{RhodesSullivant:10}.
Although these results do not provide practical algorithms for reconstructing
the corresponding mixtures, they do give hope that these problems
may be tackled successfully.

Beyond the identifiability question, there seems to have been little rigorous
work on reconstructing phylogenetic mixture models. One positive
result is the case of the molecular clock assumption with across-sites
rate variation~\cite{StSzHe:94}, although no sequence-length requirements are provided.
There is a large body of work on practical reconstruction
algorithms for various types of mixtures, notably rates-across-sites models
and covarion-type models, using mostly likelihood and bayesian methods. See e.g.~\cite{Felsenstein:04} for references.
But the optimization problems they attempt to solve are 
likely NP-hard~\cite{ChorTuller:06,Roch:06}.
There also exist many techniques for testing for the presence of a mixture
(for example, for testing for rate heterogeneity), but such tests typically require
the knowledge of the phylogeny. See e.g.~\cite{HuelsenbeckRannala:97}.

Here we give both identifiability and reconstruction
results. Whereas Steel et al.~\cite{StSzHe:94}
show that any two fixed trees can be made indistinguishable
with an appropriate (arbitrarily complex) choice
of scaling distributions, we show in essence
that, given a fixed rate distribution
(or a well-behaved class of rate distributions),
sufficiently large trees are typically distinguishable. 
After a draft of our results 
were circulated~\cite{MosselRoch:08}, 
related results for large trees 
were established by 
Rhodes and Sullivant~\cite{RhodesSullivant:10}
using different techniques. 
In particular, our technical assumptions are
similar in spirit to the genericity condition
in~\cite{RhodesSullivant:10}. Although our
genericity assumptions are stronger, they allow 
an efficient reconstruction of the model
and explicit bounds on sequence-length requirements.
Note moreover that our results apply to
general, possibly continuous, nonparametric
rate distributions. 

The proof of our main results relies
on the construction of a {\em site clustering statistic} that 
discriminates between different rates.
A similar statistic was also used 
in~\cite{SteelSzekely:06} in a different context. 
However, 
in contrast to~\cite{SteelSzekely:06}, our main 
reconstruction result
requires that
a site clustering statistic be constructed
based only on data generated by
the mixture---that is, {\em without} prior
knowledge of the model.

\subsection{Overview of techniques}\label{section:overview}

\paragraph{A simplified setting} 
To illustrate our main ideas, we first consider a simple two-speed model. 
Assume that molecular sequences have two types of sites:
``slow'' and ``fast.''
Both types of sites evolve independently
by single substitution on a common evolutionary tree according, say,
to a standard Jukes-Cantor model of substitution,
but the fast ones
evolve three times as fast. 
See Section~\ref{section:preliminaries} for
a formal definition of the Jukes-Cantor model.
To keep things simple, assume for now that
the evolutionary tree is a complete binary tree with $n = 2^h$ leaves,
where $h$ is the number of levels. 
(Note that our results apply to much more general
rate distributions.
We also discuss how to deal with general trees.
See below.)

Our approach is based on the following question:
Is it possible to tell {\em with high confidence} which sites are slow or fast, with no prior
knowledge of the phylogeny that generated them?
Perhaps surprisingly, the answer is {\em yes}---at least 
for large trees.
This far-reaching observation does not seem to have been made previously.

To see how this works, assume for the time being that we know the phylogeny.
We will show how to remove this assumption below.
Take a pair of leaves $a,b$. The effect of the speed of a
site can be seen in the probability of agreement between $a$ and $b$:
the leaves agree more often on slow-evolving sites.
Hence, if a site shows agreement between $a$ and $b$, one may deduce that
the site is more likely to be slow-evolving. But this is too little information
to infer {\em with high confidence} the speed of a site. Instead, one may
look at a larger collection of pairs of leaves
and consider the statistic that counts how many of them agree on a given site.
The idea is that a large number of agreements should indicate a slow site.
For this scheme to work accurately, we require two properties from this statistic:
{\em separation} and {\em concentration}.  By separation, we mean that the expected value
of the statistic should be different on slow and fast sites---in order to distinguish them.
By concentration, we mean that the statistic should lie very close to its expectation. These two properties
produce a good site clustering test.
To satisfy them, the pairs of leaves involved must be chosen carefully.

\paragraph{Separation and concentration} To obtain separation, it is natural to use only pairs of ``close'' leaves.
Indeed, leaves that are far away are practically independent and the speed
of a site has very little noticeable effect on their agreement.
As for concentration, what one needs 
is the kind of conditions that give rise
to the central limit theorem: a large sum of small independent contributions.
For symmetric models such as the Jukes-Cantor model,
the {\em agreement events} on two pairs of leaves $(a,b)$ 
and $(c,d)$ are independent
as long as the paths between $(a,b)$ and $(c,d)$ do not intersect.
Therefore, we are led to consider the following statistic:
count how many cherries (that is, sister leaves) agree and
divide by the
total number of cherries to obtain a fraction.
One can show from the considerations above that such a statistic is highly concentrated.


\paragraph{Unknown, general tree} 
However our derivation {\em so far} has relied heavily on
two unsatisfied premises: 
\begin{enumerate}
\item That the tree is known. This is of course not 
the case since our ultimate goal is precisely
to reconstruct the phylogeny.

\item And that the tree is complete. In particular,
our argument uses the fact that complete binary trees 
contain many cherries. But general trees
may have very few cherries.
\end{enumerate}
{\em Perhaps surprisingly, neither of these conditions is necessary.} 
The bulk of the technical contributions of this paper 
lie in getting rid of these assumptions.
We show in particular
how to construct a site clustering statistic similar to the one above directly
from the data without prior knowledge of the tree. 
At a high
level, all one needs is to select a large collection of ``sufficiently correlated'' pairs of leaves
and then ``dilute'' them to discard pairs that are too close to each other.
This leads to a highly concentrated site-wise statistic.
See Section~\ref{section:preliminaries} for a statement of our results.

\section{Definitions and Results}\label{section:preliminaries}

\subsection{Basic Definitions}

\paragraph{Phylogenies}
A phylogeny is a graphical representation 
of the speciation history of a group of organisms.
The leaves typically correspond to current species. 
Each branching indicates a speciation event. 
Moreover we associate to each edge a 
positive weight.
This weight can be thought roughly as the
time elapsed on the edge multiplied by the
mutation rate which may also depend on the edge. 
More formally:
\begin{definition}[Phylogeny]\label{def:phylo}
A \emph{phylogeny} $T = (V,E;L,\weight)$ is a tree with vertex set $V$, edge set $E$ and
$n$ (labelled) leaves $L = [n] = \{1,\ldots,n\}$ such that 1) the degree of all internal vertices
$V-L$ is exactly $3$, and 2) the edges are assigned weights $\weight : E \to (0,+\infty)$.
We let $\tcal[T] = (V,E;L)$ be the
{\em topology} of $T$.
A phylogeny is naturally equipped with a so-called 
{\em tree metric} on the leaves
$\dist : L\times L \to (0,+\infty)$ defined as follows
\begin{equation*}
\forall u,v \in L,\ \dist(u,v) = \sum_{e\in\path_T(u,v)} \weight_e,
\end{equation*}
where $\path_T(u,v)$ is the set of edges on the path between
$u$ and $v$ in $T$.
We will refer to $\dist(u,v)$ as the {\em evolutionary
distance} between $u$ and $v$.
Since under the assumptions above there is a one-to-one
correspondence between $\dist$ and $\weight$
(see e.g.~\cite{SempleSteel:03}),
we write either $T = (V,E;L,\dist)$ or $T = (V,E;L,\weight)$.
We also sometimes use the natural extension of $\dist$ to the internal vertices
of $T$. 
\end{definition}
We will sometimes restrict ourselves to the following standard special case.
\begin{definition}[Regular Phylogenies]
Let $0 < f \leq g < +\infty$.
We denote by $\tcal_{f,g}$ the set of phylogenies $T = (V,E; L,\weight)$ such that
$\forall e\in E$, $f \leq \weight_e \leq g$.
\end{definition}

\paragraph{Poisson Model}
A standard model of DNA sequence evolution
is the following \emph{Poisson model}. See e.g.~\cite{SempleSteel:03}.
\begin{definition}[Poisson Model]\label{def:mmt}
Consider the following stochastic
process.
We are given a phylogeny $T = (V,E; [n],\weight)$
and a finite set $\rcal$ with $r$ elements.
Let $\pi$ be a probability distribution
on $\rcal$.
Let $Q \in \real^{r\times r}$ be the following \emph{rate matrix}
\begin{displaymath}
Q_{x y} =
\left\{
\begin{array}{ll}
\pi_y, & \text{if $x \neq y$},\\
\pi_y - 1, & \text{o.w.}
\end{array}
\right.
\end{displaymath}
Associate to each edge $e\in E$ the stochastic matrix
\begin{displaymath}
\left[M(e)\right]_{x y} = \left[\exp\left(\weight_e Q\right)\right]_{x y} =
\left\{
\begin{array}{ll}
\pi_x + (1 - \pi_x)e^{-\weight_e}, & \text{if $x = y$},\\
\pi_y(1 - e^{- \weight_e}) , & \text{o.w.}
\end{array}
\right.
\end{displaymath}
The process runs as follows.
Choose an arbitrary root $\rho \in V$.
Denote by $E_\downarrow$ the set $E$ directed away from the root.
Pick a state for the root according
to $\pi$. Moving away from the root toward the leaves,
apply the channel $M(e)$ to each edge $e$ independently.
Denote the state so obtained $\sigma_V = (\sigma_v)_{v\in V}$. In particular,
$\sigma_{[n]}$ is the state at the leaves.
More precisely, the joint distribution of $\sigma_V$ is given by
\begin{equation*}
\mu_V(\sigma_V) = \pi_{\rho}(\sigma_\rho)
\prod_{e = (u,v) \in E_\downarrow} \left[M(e)\right]_{\sigma_{u} \sigma_{v}}.
\end{equation*}
For $W \subseteq V$, we denote by $\mu_W$ the marginal
of $\mu_V$ at $W$. Under this model, the weight $\weight_e$ is
the expected number of substitutions on edge $e$ 
in a related
continuous-time process. 
The \emph{$r$-state Poisson model} is the
special case when $\pi$ is the uniform
distribution over $\rcal$.
In that case, 
we denote the distribution of $\sigma_V$ by
$\dcal[T,r]$. When $r$ is clear from the context,
we write instead $\sigma_V \sim \dcal[T]$.
\end{definition}
More generally, we take
$k$ independent samples $(\sigma^{i}_{V})_{i=1}^k$
from the model above, that is, $\sigma^1_V, \ldots, \sigma^k_V$
are i.i.d.~$\dcal[T,r]$.
We think of $(\sigma_v^i)_{i=1}^k$
as the sequence at node $v \in V$. Typically,
$\rcal = \{\mathrm{A}, \mathrm{G},\mathrm{C},\mathrm{T}\}$ and
the model describes how DNA sequences stochastically evolve by point mutations
along an evolutionary tree---under the assumption 
that each site in the sequences evolves
independently.
\begin{example}[CFN and Jukes-Cantor Models]
The special case $r=2$ corresponds to the so-called CFN model.
The special case $r=4$ is the well-known Jukes-Cantor model.
\end{example}
We fix $r$ throughout.
\begin{remark}
We discuss the more general GTR model
in the concluding remarks.
\end{remark}

\paragraph{Rates-across-sites model}
We introduce the basic rates-across-sites model 
which will be the focus of this paper.
We will use the following definition.
\begin{definition}[Phylogenetic Scaling]
Let $T = (V,E;[n],\weight)$ be a phylogeny and
$\Lambda$, a constant in $[0,+\infty)$. Then we denote by
$\Lambda T$ the phylogeny obtained by scaling the weights of $T$
by $\Lambda$, that is, $\Lambda T = (V,E;[n],\Lambda \weight)$.
\end{definition}
\begin{definition}[Rates-Across-Sites Model (see e.g.~\cite{StSzHe:94})]
In the \emph{generalized Poisson model}
we are given
a phylogeny $T$
and a \emph{scaling factor} $\Lambda$, that is,
a random variable on
$[0,+\infty)$.
Let $\Lambda_1, \ldots, \Lambda_k$ be i.i.d.~copies of $\Lambda$.
Conditioned on $\Lambda_1, \ldots, \Lambda_k$, 
the samples $(\sigma^{i}_{V})_{i=1}^k$ generated under this model
are independent with
$\sigma^j_V \sim \dcal[\Lambda_j T]$, $j=1,\ldots,k$.
We denote by $\dcal[T, \Lambda, r]$ the probability
distribution
of $\sigma^1_V$. We also let 
$\overline{\dcal}[T, \Lambda, r]$ be the probability
distribution of $\sigma^1_L$.
\end{definition}

\subsection{Main results}

\paragraph{Tree identifiability}
To provide a {\em uniform} bound on the minimum
tree size required for our identifiability result to hold,
we make explicit assumptions on the mutation model.
For $s \geq 0$, let 
$$
\mgf(s) = \E\left[e^{-s \Lambda}\right],
$$
be the {\em moment generating function} (or
one-sided Laplace transform) of the scaling
factor $\Lambda$. The probability distribution
of $\Lambda$ is determined
by $\mgf$. See e.g.~\cite{Billingsley:95}.
We normalize $\Lambda$ so that
$$
- \mgf'(0) = \E\left[\Lambda\right]  = 1.
$$
In particular, $\Lambda$ is not identically
$0$ and $\mgf$ is continuous and strictly decreasing.
\begin{assumption}\label{assump:general}
Let
$0 < f \leq g  < +\infty$, 
and $M > 0$.
The following set of assumptions on
a generalized Poisson model
will be denoted by $\assump(
f,g,M)$: 
\begin{enumerate}
\item {\em Regular phylogeny:} 
The phylogeny $T = (V,E;[n],\weight) $ is in
$\tcal_{f,g}$.

\item {\em Mass close to $0$:}
We have that
$$
\mgf^{-1}\left(e^{-6g}\right) \leq \maxd. 
$$
\end{enumerate}
In words, an evolutionary distance
of $M$ under $\Lambda$-scaling
produces a correlation corresponding 
to an evolutionary
distance of at least $6g$ without the scaling.
We denote by $\gpmassump(f, g, M, n_0)$
the set of generalized Poisson models
satisfying $\assump(f, g, M)$ with at least $n_0$ leaves.
\end{assumption}  
\begin{remark}\label{rem:zero}
Note that the results in~\cite{StSzHe:94} indicate
that appropriate conditions are needed to
obtain a tree identifiability result in the generalized
Poisson model when the random scaling
is unknown. We do not claim that the conditions
above are minimal.
The first assumption is meant to ensure
that there is enough signal 
to reconstruct the tree. 
The second assumption bounds the distortion
of the random scaling for evolutionary distances
corresponding to short paths. It essentially implies
that the probability mass of $\Lambda$ close to $0$
is bounded. In particular, note that if the probability mass below
$$
\eps = -\frac{1}{M} \ln \left(\frac{e^{-6g} - \delta}{1-\delta}\right),
$$
is less than $\delta$ (for $\delta < e^{-6g}$),
then
$$
\mgf(M)
\leq \delta + (1 - \delta) e^{-\eps \maxd}
\leq e^{-6g},
$$
and the assumption is satisfied.
Conversely, if $\Lambda$ satisfies the second
assumption then the probability mass $\delta$ 
below $\eps$ (for $\eps < 6g/M$) must be such that
$$
\delta \leq e^{-(6g - \eps M)},
$$
since
$$
e^{-6g} \geq \mgf(M) \geq \delta e^{-\eps M}. 
$$
\end{remark}
\begin{remark}
The second assumption implicitly implies that
$$
\P[\Lambda = 0] \leq e^{-6g}.
$$
This is in fact not necessary.
By first removing all invariant sites, 
it should be possible to extend our main theorem 
to moment generating functions
of the form
$$
\mgf(s) = \alpha + (1 - \alpha)\mgf_+(s),
$$
where $0 \leq \alpha < 1$ is uniformly bounded
away from $1$ and $\mgf_+$ satisfies
the assumptions above.
Indeed, on a large phylogeny,
it is extremely unlikely to
produce an invariant site
using a positive scaling factor. 
Hence removing all invariant sites 
has the effect of essentially
restricting the dataset to the positive part
of the distribution of $\Lambda$. We leave the
details to the reader. Given this observation,
in the rest of the manuscript one can assume
that
$$
\P[\Lambda = 0] = 0.
$$
\end{remark}
\begin{theorem}[Tree identifiability]
\label{thm:1}
Fix
$0 < f \leq g  < +\infty$, 
and $M > 0$.
Then, there exists 
$n_0(f, g, M) \geq 1$ such that,
if $(T, \Lambda)$
and $(T',\Lambda')$ are in
$\gpmassump(f, g, M, n_0)$
with $\tcal[T] \neq \tcal[T']$, then
$$
\overline{\dcal}[T,\Lambda,r] \neq \overline{\dcal}[T',\Lambda',r].
$$
(Recall that $\overline{\dcal}[T,\Lambda,r]$ 
denotes the distribution {\em at the leaves}.)
\end{theorem}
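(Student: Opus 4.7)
The plan is to show that the leaf distribution $\overline{\dcal}[T,\Lambda,r]$ determines the topology $\tcal[T]$ whenever the number of leaves exceeds some $n_0(f,g,M)$. I would establish this by exhibiting a procedure that takes $\overline{\dcal}$ as input and outputs $\tcal[T]$; since the procedure is a deterministic function of $\overline{\dcal}$ alone, two models producing identical leaf distributions must share the same topology, which is the contrapositive of the theorem. The two ingredients, following the overview in Section~\ref{section:overview}, are a site clustering statistic built from a dilute collection of short leaf pairs, combined with the classical unmixed identifiability of phylogenies~\cite{Chang:96}.

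First I would read off, for every pair $(a,b)$ of leaves, the pairwise agreement probability
$$
q_{ab} \;=\; \P[\sigma_a = \sigma_b] \;=\; \frac{1}{r} + \left(1-\frac{1}{r}\right)\mgf(\dist(a,b)),
$$
which is a function of $\overline{\dcal}$ alone. Assumption~\ref{assump:general}(2) translates into a universal threshold $q_\star = 1/r + (1-1/r)e^{-6g}$ such that $\{(a,b):q_{ab}\geq q_\star\}$ coincides with the short pairs $\{(a,b) : \dist(a,b) \leq M\}$; since $\mgf$ is strictly decreasing, one threshold suffices even though $\mgf$ itself is unknown. Then I would greedily extract a \emph{dilute} collection $\pcal$ of size $|\pcal| = \Omega(n)$ consisting of short pairs whose paths in $T$ are pairwise vertex-disjoint, with path-disjointness verified through suitable tests on four-leaf marginals of $\overline{\dcal}$. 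The existence of such a $\pcal$ in any $T \in \tcal_{f,g}$ with sufficiently many leaves is a combinatorial statement about bounded-degree, bounded-edge-length trees.

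With $\pcal$ in hand, consider the site statistic
$$
S(\sigma_L) \;=\; \frac{1}{|\pcal|}\sum_{(a,b)\in\pcal}\ind\{\sigma_a = \sigma_b\}.
$$
Conditioned on $\Lambda = \lambda$, path-disjointness makes the summands independent by the Markov property, each with conditional mean $\frac{1}{r} + (1-\frac{1}{r})e^{-\lambda \dist(a,b)}$, so Hoeffding's inequality gives $\P(|S-\bar S(\lambda)|>\eps\mid\Lambda=\lambda) \leq 2e^{-2\eps^2|\pcal|}$, where $\bar S(\lambda)$ is the conditional mean and is strictly decreasing in $\lambda$. Consequently the law of $S$ under $\overline{\dcal}$ approaches the pushforward of the law of $\Lambda$ by $\bar S$ as $n \to \infty$; conditioning $\overline{\dcal}$ on a narrow event $\{|S - s_\star| < \eps\}$ for any $s_\star$ in the support of this pushforward pins down $\Lambda$ to a small neighbourhood of some $\lambda_\star > 0$. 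The resulting conditional leaf law lies within total variation $o(1)$ of the leaf marginal of $\dcal[\lambda_\star T,r]$, and applying the unmixed identifiability result of~\cite{Chang:96} to this asymptotically unmixed distribution recovers $\tcal[\lambda_\star T] = \tcal[T]$. Choosing $n_0$ so that the Hoeffding tail and the pushforward approximation are smaller than Chang's separation gap yields the required quantitative bound.

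The main obstacle is the combinatorial construction of $\pcal$ purely from $\overline{\dcal}$: the evolutionary distances are accessible only through the unknown monotone transform $\mgf$, so one must certify $\Omega(n)$ short pairs with pairwise disjoint paths without prior knowledge of either the topology or the law of $\Lambda$. Assumption~\ref{assump:general}(2) is what makes a single universal threshold $q_\star$ discriminate short from long pairs, and the regularity condition $T \in \tcal_{f,g}$ is what forces such a dilute family to exist in large enough number, even in trees lacking the abundant cherries of the complete binary case. Absent these separation conditions, the constructions of~\cite{StSzHe:94} (cf.\ Remark~\ref{rem:zero}) show that identifiability can genuinely fail, so they constitute the essential quantitative content of the theorem.
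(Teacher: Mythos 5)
Your overall architecture matches the paper's: identifiability is deduced by exhibiting a procedure that recovers $\tcal[T]$ from the leaf distribution, and the engine of that procedure is a concentrated site statistic built on an $\Omega(n)$-sized, path-disjoint ("sparse") collection of short leaf pairs, selected using only thresholded pairwise agreement probabilities. Your sandwich $\Upsilon_{4g}\subseteq\{q_{ab}\geq q_\star\}\subseteq\Upsilon_M$ (it is a sandwich, not an equality, but that is all that is needed) and the Hoeffding/Markov-property argument for conditional independence of the summands are exactly Propositions~\ref{claim:independence}--\ref{claim:concentration} and~\ref{proposition:clusteringstatistic}. One cosmetic difference: the paper never tests path-disjointness via four-leaf marginals; it removes, around each selected pair, all pairs whose \emph{pairwise} agreement with the selected endpoints exceeds a threshold, and disjointness then follows from a purely geometric four-point argument on the true metric (Proposition~\ref{claim:upsilon2}). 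That is worth noting because additive distances are not accessible through the unknown transform $\mgf$, so a distributional four-point test is not available.

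The genuine gap is in your last step. You condition $\overline{\dcal}$ on $\{|S-s_\star|<\eps\}$, which pins $\Lambda$ to a window of width $O(1/\log n)$ around some $\lambda_\star$, and then claim the conditional leaf law is within total variation $o(1)$ of the leaf marginal of the \emph{unmixed} model $\dcal[\lambda_\star T,r]$, so that Chang's theorem applies. This TV claim is false for large $n$: the joint distribution on all $n$ leaves is extremely sensitive to $\lambda$, since perturbing $\lambda$ by $\Theta(1/\log n)$ perturbs each of the $\Theta(n)$ edge transition matrices, and the resulting divergence between $\overline{\dcal}[\lambda T]$ and $\overline{\dcal}[\lambda_\star T]$ accumulates to order $n/\log^2 n\to\infty$; the two leaf laws are asymptotically mutually singular, not TV-close. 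Moreover, even if one had TV-closeness, Chang's identifiability is an exact algebraic statement with no quantitative separation gap between the leaf-distribution classes of different topologies, and any such gap would itself depend on $n$, making "choose $n_0$ to beat Chang's gap" circular. The paper sidesteps both problems by never comparing full leaf distributions: from the abundant bin it only extracts the \emph{pairwise} quantities $e^{-\lambda^*\dist(a,b)}$ for pairs with $\dist(a,b)=O(\log n)$, for which the $O(1/\log n)$ spread of $\Lambda$ within a bin induces only a bounded multiplicative distortion (Propositions~\ref{lemma:variation} and~\ref{lemma:accuracy}); these yield a $(\tau,\Psi)$-distorted metric, and the topology is then recovered by the local, robustness-quantified reconstruction guarantee of Theorem~\ref{thm:dmr} rather than by Chang. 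To repair your argument you would need to replace the "condition and invoke Chang" step by an explicitly local and quantitatively robust topology-recovery step of this kind.
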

\begin{remark}
Note that we allow $\Lambda \sim \Lambda'$
(where $\sim$ denotes equality in distribution).
This is the sense in which our result is 
a {\em tree} identifiability result.
\end{remark}
\begin{remark}
Note that our identifiability result applies
only to {\em sufficiently large} phylogenies. 
Computing $n_0$ from our techniques
is difficult (and in general depends
on the parameters $f,g,M$). One could estimate
the required size by running
the reconstruction algorithm below on
simulated data for various sizes and
parameters. We leave such empirical
studies for future work.
\end{remark}
The proof of our main theorem relies on the following reconstruction result.

\paragraph{Tree reconstruction} Moreover,
we give a stronger result implying that
the phylogeny can be reconstructed with high confidence
using polynomial length sequences in polynomial time.
The proof appears in Sections~\ref{section:existence},
\ref{section:constructing}
and \ref{section:full}.
\begin{theorem}[Tree Reconstruction]\label{thm:2}
Under Assumption~\ref{assump:general},
for all $0 < \delta < 1$,
there is a $\gamma_k>0$ large enough
so that the topology of the tree
can be reconstructed in polynomial
time using $k = n^{\gamma_k}$
samples, except with probability
$\delta$.
\end{theorem}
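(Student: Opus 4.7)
The plan is to build an algorithm in three stages: (i) identify a large collection of ``close'' diluted leaf pairs directly from the data, (ii) use them to define a site-wise statistic that separates rate classes and concentrates around rate-dependent means, and (iii) cluster sites according to this statistic and then feed each (approximately) unmixed subsample into a standard distance-based reconstruction method. Theorem~\ref{thm:2} will then follow by union-bounding the failure probabilities of these stages over polynomially many events and by taking $k=n^{\gamma_k}$ with $\gamma_k$ large enough.

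For stage (i), note that although single-site distance estimators fail in the mixture, the empirical frequency of agreement between two leaves $a,b$ across all $k$ samples concentrates (standard Hoeffding/Chernoff in $k$) around $\E[q(\Lambda\, d(a,b))]$, where $q(\cdot)$ is the agreement probability in the unscaled Poisson model. By monotonicity of $\mgf$ and the regularity assumptions, thresholding this empirical agreement at an appropriate level gives a set of candidate ``close'' pairs whose true evolutionary distance is provably bounded, simultaneously for all pairs, except with probability $n^{-\Omega(1)}$ as soon as $k$ is a sufficient polynomial in $n$. From these candidates I would greedily extract a large sub-collection $\mathcal{P}$ of pairs whose connecting paths in $T$ are edge-disjoint (``dilution''); I would argue that $|\mathcal{P}|=\Omega(n)$ by a combinatorial covering argument on any regular phylogeny. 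Crucially, dilution can be certified from data alone: two pairs with disjoint paths are (conditionally on $\Lambda$) independent, which one can check via four-point-style empirical correlation tests.

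For stage (ii), define for each site $i$ the statistic $S_i = |\mathcal{P}|^{-1}\sum_{(a,b)\in\mathcal{P}} \mathbf{1}\{\sigma^i_a=\sigma^i_b\}$. Conditional on $\Lambda_i=\lambda$, the indicators are independent by disjointness of paths, and each has mean in a rate-monotone interval determined by $f,g,M$. Hoeffding's inequality in $|\mathcal{P}|=\Omega(n)$ then gives the large-tree concentration $|S_i - \E[S_i\mid\Lambda_i]|\le n^{-\Omega(1)}$ with exponentially small failure probability per site; union-bounding over $k=\mathrm{poly}(n)$ sites still succeeds. The separation property follows because $\E[S_i\mid\Lambda_i=\lambda]$ is a strictly monotone continuous function of $\lambda$, so sites with different realized scalings produce well-separated $S_i$ values.

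For stage (iii), I would bin the sites into a polynomial number of buckets based on $S_i$, each bucket of width comparable to the concentration radius; within a bucket all realized scalings lie in a narrow window, so the conditional data distribution there is close in total variation to an \emph{unmixed} Poisson model on $\lambda T$ for some $\lambda$ in that window. On any bucket of size $\Omega(n^{\gamma'})$ (guaranteed because $\Lambda$ has some mass near $\E[\Lambda]=1$ and $k=n^{\gamma_k}$ with $\gamma_k$ large) one can then run a standard polynomial-time distance method for trees in $\tcal_{f,g}$, which succeeds with high probability given polynomial-length sequences. Finally, since the topology $\tcal[T]$ is common to every bucket, I would take a majority vote (or simply report any successful bucket's output) to obtain the reconstructed tree. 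The main obstacle is stage (i): certifying ``closeness'' and ``disjointness'' of pairs from mixture data alone while preserving $|\mathcal{P}|=\Omega(n)$---this is where Assumption~\ref{assump:general}, particularly the bound on $\mgf^{-1}(e^{-6g})$, is used to guarantee that short-path empirical correlations remain large enough under scaling to be detectable above the noise from long-path pairs.
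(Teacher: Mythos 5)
Your three-stage architecture is essentially the one the paper uses: threshold the all-sites empirical agreement $\hat q(a,b)$ to find close pairs, sparsify to edge-disjoint pairs, form the per-site averaged agreement statistic, bin sites by its value, and run a distance-based method on an abundant bin. Two steps, however, are stated in a way that would not survive as written.

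First, your claim that within a narrow bucket ``the conditional data distribution is close in total variation to an unmixed Poisson model on $\lambda T$'' is too strong and, for the joint distribution at all $n$ leaves, false: a perturbation of the scaling factor of order $1/\log n$ (or even $n^{-c}$) changes the $n$-leaf joint law by a non-negligible total variation amount once $n$ is large, so you cannot simply transfer the guarantee of an unmixed reconstruction algorithm by a TV coupling. What is actually needed, and what the paper proves, is much weaker: that the \emph{pairwise} agreement frequencies computed within the abundant bin approximate $e^{-\lambda^* d(a,b)}$ well enough to yield a $(\tau,\Psi)$-distorted metric with $\Psi=\Theta(\log n)$. This is where the bin width matters quantitatively: the within-bin rate spread $\gamma_\Lambda/\log n$ multiplies distances up to $\Theta(\log n)$ in the exponent, so the width must be $O(1/\log n)$ precisely so that the multiplicative error $e^{\gamma_\Lambda d(a,b)/\log n}-1$ stays controlled on the distances the distance method actually uses. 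Your proposal never confronts this trade-off, and the TV route would require buckets so narrow that no bucket is abundant.

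Second, your certification of dilution ``via four-point-style empirical correlation tests'' or by empirically checking conditional independence is vague at the one point where the construction is delicate. Conditional independence given $\Lambda$ cannot be tested without knowing $\Lambda$, and the four-point condition involves sums of distances, which are not directly estimable from mixture data (only $\mgf(d(a,b))$ is). The paper's resolution is to sparsify by thresholding the cross-pair agreements $\hat q(c^*,c)$: retained pairs have within-pair distance at most $m$ while all four cross-distances between distinct retained pairs exceed $m$, and this configuration \emph{deterministically} forces path disjointness by the four-point condition, with the linear lower bound $|\Upsilon|\ge\gamma_s n$ coming from a counting argument on how many pairs each sparsification step can delete. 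Relatedly, your assertion that an abundant bucket exists ``because $\Lambda$ has some mass near $\E[\Lambda]=1$'' is not justified by the mean alone; the paper needs Markov's inequality for the upper tail together with the assumption $\mgf^{-1}(e^{-6g})\le M$ to bound the mass near zero, yielding a constant probability that $\Lambda$ lands in a fixed bounded window before pigeonholing over the bins.
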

\begin{remark}
Once the tree has been estimated,
one can also infer the
rate distribution. Details are
left to the interested reader.
\end{remark}

\section{Site clustering statistic: 
Existence and properties}
\label{section:existence}

In this section, we introduce our main site clustering statistic and show
that it is concentrated.
Let $0 < f \leq g < +\infty$ and $M > 0$. 
In this section, we fix a phylogeny $T = (V,E;[n],\weight)$ in
$\tcal_{f,g}$.
We let $(\sigma^{i}_{L})_{i=1}^k$ 
be $k$ i.i.d.~samples from 
$\overline{\dcal}[T, \Lambda, r]$
where the generalized Poisson model $(T,\Lambda)$
satisfies Assumption~\ref{assump:general}.
Moreover, let $\Lambda_1,\ldots,\Lambda_k$
be the i.i.d.~scaling factors corresponding to the
$k$ samples above.

\paragraph{Some notation}
We will also use the notation
$[n]^2 = \{(a,b) \in [n]\times [n]\ :\ a \leq b\}$,
$[n]^2_{=} = \{(a,a)\}_{a\in [n]}$,
and $\nsqneq = [n]^2 - [n]^2_{=}$.
We also denote by $\nfourneq$ the set of pairs
$(a,b), (c,d)\in \nsqneq$ such that
$(a,b) \neq (c,d)$ (as pairs).
For $\alpha > 0$, we let
\begin{displaymath}
\Upsilon_\alpha 
= \{(a,b) \in \nsqneq\ :\ \dist(a,b) \leq \alpha\},
\end{displaymath}
be all pairs of leaves in $T$ at evolutionary 
distance at most $\alpha$.
Let $p_\infty = 1 - q_\infty$ where
$q_\infty = \sum_{x \in \rcal} \pi_x^2$.

\subsection{What makes a good site clustering statistic?}

For a site $i = 1,\ldots, k$, consider a statistic of the form
\begin{equation}\label{eq:ucal}
\ucal_i =
\frac{1}{|\Upsilon|} \sum_{(a,b) \in \Upsilon} 
p_\infty^{-1}[\ind\{\sigma^i_a = \sigma^i_b\} - q_\infty],
\end{equation}
where
$\Upsilon \subseteq \nsqneq$,
is a subset of pairs of distinct leaves
independent of $i$. Using the expression
for the transition matrix given in Definition~\ref{def:mmt},
note that
\begin{eqnarray*}
\E[\ucal_i\,|\,\Lambda_i]
&=& \frac{1}{|\Upsilon|} \sum_{(a,b) \in \Upsilon} 
p_\infty^{-1}\left[\E[\ind\{\sigma^i_a = \sigma^i_b\}\,|\,\Lambda_i]
 - q_\infty\right]\\
&=& \frac{1}{|\Upsilon|} \sum_{(a,b) \in \Upsilon}
p_\infty^{-1}\left[\sum_{x\in \rcal} \pi_x 
\left(\pi_x + (1 - \pi_x)e^{-\Lambda_i\dist(a,b)}\right)
 - q_\infty
\right]\\
&=& \frac{1}{|\Upsilon|} \sum_{(a,b) \in \Upsilon} e^{-\Lambda_i\dist(a,b)},
\end{eqnarray*}
which is {\em strictly decreasing in $\Lambda_i$}.
We need two properties for (\ref{eq:ucal}) to make a good site clustering
statistic: separation and concentration.

For {\em separation}, that is, for the statistic
above to distinguish different scaling factors as much
as possible, we require the following condition:
\begin{enumerate}

\item[S1] Each pair in $\Upsilon$ is composed of two ``sufficiently close'' leaves, that is,
there is $\alpha < +\infty$ such that
$\Upsilon \subseteq \Upsilon_\alpha$.

\end{enumerate}
Indeed, if two leaves are far away, their joint distribution is close to independent
and scaling has little effect on their agreement. A much better separation
is obtained from close leaves.

To guarantee {\em concentration} of a statistic of the type (\ref{eq:ucal}),
we require the following three conditions on $\Upsilon$:
\begin{enumerate}

\item[C1] The set
$\Upsilon$ is ``large enough'' and 
each pair makes a ``small contribution'' to the sum.
This will be satisfied 
if we show
that we can take
$|\Upsilon| = \Theta(n)$,
as (\ref{eq:ucal}) is a sum
of $\{0,1\}$-variables.

\item[C2] Agreement for different pairs in $\Upsilon$ 
is ``sufficiently uncorrelated,'' e.g., independent.

\end{enumerate}
These conditions will allow us to apply standard 
large deviations arguments.

\begin{example}[Full sum]
As a first guess, one may expect that taking
$\Upsilon$ to be {\em all pairs of leaves}
may give a good site clustering statistic. 
However, in general 
this is not the case as we show in the
following example. Consider the two-state
case on a complete
binary tree with identical edge lengths $\mu$
and $\Lambda = 1$.
For mathematical convenience, assume that the
states are $\rcal = \{+1,-1\}$ and let
$$
\gamma = 2 e^{-2\mu}.
$$
Then, up to a multiplicative factor and
additive constant, the clustering statistic
is simply
$$
\ucal^{(h)} = \sum_{(a,b)\in \nsqneq} \sigma_a \sigma_b,
$$ 
for a tree with $h$ levels.
Using a calculation of~\cite[Section 5]{EvKePeSc:00},
one has
\begin{equation}\label{eq:ekps}
\E[\sigma_a \sigma_b] = e^{-\dist(a,b)}.
\end{equation}
Dividing the expectation into terms
over the first subtree of the root,
terms over the second subtree of the root,
and terms between the two subtrees, we have
\begin{equation*}
\E\left[\ucal^{(h)}\right]
= 2\E\left[\ucal^{(h-1)}\right] + (2^{h-1})^2 e^{-2h\mu}.
\end{equation*}
Solving for the recursion gives
$$
\E\left[\ucal^{(h)}\right] 
= \gamma 2^{h-2} \frac{\gamma^h - 1}{\gamma - 1}
= O\left(2^{2h} \gamma^{2h}\right),
$$
as $h \to \infty$.
On the other hand,
the expectation of the square
$\E[(\ucal^{(h)})^2]$
is a sum of terms of the form
$\E[\sigma_{z_1}\sigma_{z_2}\sigma_{z_3}\sigma_{z_4}]$
where some of the $z$'s may be repeated.
All such terms are non-negative because
of~\eqref{eq:ekps} and the fact that
terms where all $z$'s are different factor
into a product by Proposition~\ref{claim:independence}
below. Hence
\begin{eqnarray*}
\var\left[\ucal^{(h)}\right]
&\geq& \sum_{(a,b)\in \nsqneq } \E\left[(\sigma_a \sigma_b)^2\right]
- \E\left[\ucal^{(h)}\right]^2\\
&=& \frac{2^h (2^h- 1)}{2} - \gamma^2 2^{2h-4} \left(\frac{\gamma^h - 1}{\gamma - 1}\right)^2\\
&=& \Omega(2^{2h}),
\end{eqnarray*}
if $\gamma < 1$. Hence, assuming that
$\gamma < 1$, we have
$$
\frac{\E\left[\ucal^{(h)}\right]^2}{\var\left[\ucal^{(h)}\right]}
\to 0,
$$
as $h \to \infty$. In other words, 
the sum over all pairs is too ``noisy'' to 
serve as a site clustering statistic in that case.
\end{example}

\subsection{Does it exist?}\label{section:exist}

We now show that there always exist statistics 
that satisfy the properties above
and we give explicit guarantee on their concentration.
Note that, in the current section, we only provide a proof
of {\em existence}. In particular,
in establishing existence, we use evolutionary distances
which are not available from the data.
Later, in Section~\ref{section:constructing}, we explain how to {\em construct} such a statistic 
from the data $\overline{\dcal}[T, \Lambda, r]$
(or, more precisely, the samples $(\sigma^{i}_{L})_{i=1}^k$)
{\em without knowledge of the tree topology,
evolutionary distances, or site scaling factors}.

We now explain how the conditions above 
can be achieved for an appropriate
choice of $\Upsilon$ on {\em any} tree topology.
Note, however, that $\Upsilon$ depends
on $T$. 

\paragraph{Independence}
We first show that the clustering statistic
(\ref{eq:ucal})
is a sum of independent variables {\em as 
long as the paths between different pairs
do not intersect.} This will allow us to satisfy C2.
\begin{proposition}[Independence]\label{claim:independence}
Assume that for all $(a,b), (a',b') \in \Upsilon$ with $(a,b) \neq (a',b')$
we have
\begin{equation*}
\path(a,b) \cap \path(a',b') = \emptyset,
\end{equation*}
where $\path(a,b)$ is the set of edges on the path between $a$ and $b$.
Then the random variables
$\left\{\ind\{\sigma_a = \sigma_b\}\right\}_{(a,b)\in \Upsilon}$,
are mutually independent.
\end{proposition}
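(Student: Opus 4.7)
The plan is to use the random-cluster (``reset'') representation of the Poisson model: for each edge $e$ independently sample $K_e\in\{0,1\}$ with $P(K_e=1)=e^{-\weight_e}$, reflecting the decomposition by which, with probability $e^{-\weight_e}$, the endpoint of $e$ keeps the state at its origin, and with probability $1-e^{-\weight_e}$ it is resampled independently from $\pi$. Let $C(v)$ denote the connected component of $v$ in the subgraph of keep-edges $\{K_e=1\}$, and assign each cluster $C$ an independent uniform state $\xi_C\in\rcal$, setting $\sigma_v=\xi_{C(v)}$.

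First I would observe that $\{C(a)=C(b)\}$ is equivalent to ``every edge of $\path(a,b)$ is a keep edge,'' so under the pairwise edge-disjoint paths hypothesis the indicators $Y_{a,b}:=\ind\{C(a)=C(b)\}$, $(a,b)\in\Upsilon$, are mutually independent, as each is determined by a disjoint set of $K_e$'s. Writing $\ind\{\sigma_a=\sigma_b\}=Y_{a,b}+(1-Y_{a,b})\ind\{\xi_{C(a)}=\xi_{C(b)}\}$, the problem reduces to understanding the joint law of the cluster-matching indicators $Z_{a,b}:=\ind\{\xi_{C(a)}=\xi_{C(b)}\}$ conditional on the full cluster structure $\ccal$.

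The key step is a graph-theoretic claim: conditional on $\ccal$, the multigraph $H(\ccal)$ on the set of clusters with edges $\{\{C(a),C(b)\}:(a,b)\in\Upsilon,\ C(a)\neq C(b)\}$ is a forest (no cycles, including no multi-edges). Once this is in hand, since the $\xi_C$'s are i.i.d.~uniform on $\rcal$, the equality indicators along the edges of a forest of i.i.d.~uniform labels are mutually independent Bernoulli$(1/r)$, so $\E[\prod_{(a,b)\in S}\ind\{\sigma_a=\sigma_b\}\mid\ccal]=(1/r)^{|\{(a,b)\in S:Y_{a,b}=0\}|}$ for every $S\subseteq\Upsilon$; averaging over $\ccal$ using independence of the $Y_{a,b}$'s yields the product factorization that characterizes mutual independence.

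I expect this forest claim to be the main obstacle, and I would prove it by contradiction via a closed-walk argument. Any cycle in $H(\ccal)$ of length $k\geq 2$ involves pairs $(a_{i_1},b_{i_1}),\ldots,(a_{i_k},b_{i_k})$ and clusters $C_1,\ldots,C_k$ arranged so that consecutive edges share a cluster. Chaining the ``pair paths'' $\path(a_{i_j},b_{i_j})$ (traversed once each) with ``connector paths'' between successive endpoints lying in a common cluster (which are automatically all keep-edges, since the tree path between two vertices in the same keep-cluster cannot cross a reset edge) produces a closed walk in $T$. In any tree, a closed walk traverses each edge an even number of times; combined with pairwise edge-disjointness of the pair paths, this forces any reset edge $e^\star$ on a pair path to lie on at least one connector path, contradicting that connector paths consist solely of keep edges.
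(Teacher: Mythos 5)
Your proof is correct, but it takes a genuinely different route from the paper's. The paper argues by induction: it conditions on the states of the vertices along $\path(a_1,b_1)$, notes that deleting those edges leaves a forest in which each remaining pair path lives in a single component (edge-disjointness forbids a path from linking two such vertices), and invokes the Markov property plus the permutation symmetry of the uniform model to conclude that the remaining indicators have a conditional law not depending on those states, hence are independent of $\ind\{\sigma_{a_1}=\sigma_{b_1}\}$. You instead work in the random-cluster representation and reduce everything to two cleanly separated facts: (i) the cluster-membership indicators $Y_{a,b}$ are functions of disjoint sets of edge variables, hence independent; and (ii) conditionally on the cluster partition, the multigraph induced by $\Upsilon$ on the clusters is a forest, which you establish by the closed-walk parity argument (a reset edge on a pair path would have to be traversed an odd number of times by all-keep connector paths). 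Both steps check out: connectivity in a subgraph of a tree does force the tree path to be all keep edges, and equality indicators of i.i.d.\ \emph{uniform} labels along the edges of a forest are indeed i.i.d.\ Bernoulli$(1/r)$, so the product factorization over all subsets follows and characterizes mutual independence for $\{0,1\}$-valued variables. Your approach is longer but more explicit about where the hypotheses enter: the edge-disjointness is used exactly twice (once for the $Y$'s, once in the parity argument), and the uniformity of $\pi$ is used exactly once (the forest-of-i.i.d.-labels step fails for non-uniform $\pi$, e.g.\ on a path of three clusters, since $\sum_x\pi_x^3>(\sum_x\pi_x^2)^2$ unless $\pi$ is uniform on its support) --- a dependence that is also present, but hidden inside the phrase ``by symmetry,'' in the paper's proof. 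The paper's induction is shorter and avoids the forest claim entirely by never comparing labels across more than one deleted path at a time.
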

\begin{proof}
Denote $\Upsilon = \{(a_1,b_1),\ldots,(a_\upsilon,b_\upsilon)\}$
with $\upsilon = |\Upsilon|$.
Let $\vcal$ be the set of nodes on the path between
$a_1$ and $b_1$.
Removing the edges
in $\path(a_1,b_1)$ creates a forest where the
$\vcal$-nodes can be taken as roots.
Note that, by symmetry and the Markov property,
conditioned on $\vcal$,
the distribution of the random variables
\begin{equation}\label{eq:independence1}
\left\{\ind\{\sigma_a = \sigma_b\}\right\}_{(a,b)\in \Upsilon
-\{(a_1,b_1)\}},
\end{equation}
does not depend on the states
of the $\vcal$-nodes. 
In particular, $\ind\{\sigma_{a_1} = \sigma_{b_1}\}$
is independent of~\eqref{eq:independence1}.
Proceeding by induction gives the result.
\end{proof}

\paragraph{Size}
To satisfy S1, we restrict ourselves to ``close pairs.''
We first show that the size of $\Upsilon_\alpha$ 
grows linearly {\em as long as $\alpha \geq 4g$},
allowing us to also satisfy C1.
A similar result is proved in~\cite{SteelSzekely:06}.
\begin{proposition}[Size of $\Upsilon_\alpha$]\label{claim:size}
Let $\alpha \geq 4g$. 
Then 
$$
\left|\Upsilon_\alpha\right| \geq \frac{n}{4}.
$$
\end{proposition}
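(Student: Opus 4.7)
Since $\Upsilon_\alpha \supseteq \Upsilon_{4g}$ for $\alpha \geq 4g$, it suffices to show $|\Upsilon_{4g}| \geq n/4$. The plan is to exhibit two disjoint families of leaf-pairs whose connecting paths in $T$ use at most four edges, each of weight at most $g$, and then to lower bound the combined count via a short case split.

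Classify each internal vertex $v$ by its number of leaf-neighbors $\ell_v \in \{0,1,2\}$, and set $n_k = |\{v \in V \setminus L : \ell_v = k\}|$. Counting leaf-incidences gives $2n_2 + n_1 = n$, and since $T$ has $n-2$ internal vertices one also has $n_0 + n_1 + n_2 = n - 2$, so $n_0 = n_2 - 2$. Let $T_{\mathrm{int}}$ be the subtree induced by the internal vertices of $T$; a vertex $v$ has $T_{\mathrm{int}}$-degree $3 - \ell_v$, so $\ell_v = 0, 1, 2$ correspond to $T_{\mathrm{int}}$-degrees $3, 2, 1$.

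The first family consists of the $n_2$ cherry pairs at vertices with $\ell_v = 2$, each at distance $\leq 2g$. The second family is built from the edges $\{v,w\}$ of $T_{\mathrm{int}}$ with $\ell_v = \ell_w = 1$: for each such edge, the two unique leaves attached to $v$ and $w$ form a pair at distance $\leq 3g$. Let $e_{11}$ denote the number of such edges. The two families involve disjoint sets of internal vertices, so the induced leaf-pairs are distinct, giving $|\Upsilon_{4g}| \geq n_2 + e_{11}$.

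To lower bound $e_{11}$, let $e_{ij}$ count $T_{\mathrm{int}}$-edges whose endpoint $\ell$-values are $\{i,j\}$. Summing degrees at the $\ell = 1$ vertices yields $2n_1 = 2e_{11} + e_{10} + e_{12}$; summing at the $\ell = 0$ and $\ell = 2$ vertices yields $e_{10} \leq 3n_0$ and $e_{12} \leq n_2$. Substituting $n_0 = n_2 - 2$ and $n_1 = n - 2n_2$ produces $e_{11} \geq n - 4n_2 + 3$. A case split then finishes: if $n_2 \geq n/4$ then $|\Upsilon_{4g}| \geq n_2 \geq n/4$, and otherwise $|\Upsilon_{4g}| \geq n_2 + (n - 4n_2 + 3) = n - 3n_2 + 3 > n/4$. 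The small cases $n \leq 3$ (where the classification breaks because some internal vertex has $\ell_v = 3$) fall under the trivial bound $\binom{n}{2} \geq n/4$. The only real obstacle is keeping the handshaking bookkeeping straight; once the identity $n_0 = n_2 - 2$ and the two edge-type inequalities are established, the conclusion is immediate.
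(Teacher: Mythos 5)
Your proof is correct, but it takes a genuinely different route from the paper's. The paper bounds the set $\Gamma$ of ``isolated'' leaves (those with no other leaf within distance $\alpha$) by a packing argument: around each $a \in \Gamma$ it places a ball $\bcal(a)$ of radius $\alpha/2$, observes these balls are pairwise disjoint and each contains at least $2^{\lfloor \alpha/2g\rfloor}$ vertices (because the tree is binary and the ball contains no second leaf), and compares against the total vertex count $2n-2$; every non-isolated leaf is then matched to some partner, and dividing by $2$ gives $|\Upsilon_\alpha| \geq \tfrac12\bigl(1 - 2^{1-\lfloor\alpha/2g\rfloor}\bigr)n$, which is $n/4$ at $\alpha = 4g$. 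You instead exhibit explicit families of close pairs --- cherries ($n_2$ of them, at distance $\leq 2g$) and ``near-cherries'' across $T_{\mathrm{int}}$-edges joining two vertices with one pendant leaf each ($e_{11}$ of them, at distance $\leq 3g$) --- and close the argument with handshake bookkeeping ($2n_2 + n_1 = n$, $n_0 = n_2 - 2$, $2n_1 = 2e_{11} + e_{10} + e_{12}$ with $e_{10}\leq 3n_0$, $e_{12}\leq n_2$) plus a case split on whether $n_2 \geq n/4$; I have checked the arithmetic and the disjointness of the two families, and the small cases $n \leq 3$ are handled. Your argument is more hands-on and identifies concrete pairs realizing the bound, which is arguably more constructive; the paper's packing argument is shorter, scales with $\alpha$ (the constant improves as $\alpha/g$ grows, which is why the statement is phrased for general $\alpha \geq 4g$), and is the template reused elsewhere in the paper (e.g.\ the leaf-counting step in the proof of Proposition~\ref{claim:upsilon2}), whereas your counting is specific to pairs within three edges and only yields the $\alpha = 4g$ case, with larger $\alpha$ handled by the trivial monotonicity $\Upsilon_{4g} \subseteq \Upsilon_\alpha$. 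Both establish the stated bound.
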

\begin{proof}
Let
\begin{displaymath}
\Gamma = \{a \in [n]\ :\ \dist(a,b) > \alpha,\ \forall b\in [n]-\{a\}\},
\end{displaymath}
that is, $\Gamma$ is the set of leaves with no other leaf at evolutionary distance $\alpha$.
We will bound the size of $\Gamma$.
For $a \in \Gamma$, let
\begin{displaymath}
\bcal(a) = \left\{v \in V\ :\ \dist(a,v) \leq \frac{\alpha}{2}\right\}.
\end{displaymath}
Note that for all $a,b \in \Gamma$ with $a\neq b$ we have
$\bcal(a) \cap \bcal(b) = \emptyset$
by the triangle inequality. Moreover, it holds that for all $a \in \Gamma$
\begin{displaymath}
\left|\bcal(a)\right| \geq 2^{\left\lfloor\frac{\alpha}{2g}\right\rfloor},
\end{displaymath}
since $T$ is binary and there is no leaf other than $a$ in $\bcal(a)$.
Hence, we must have
\begin{displaymath}
|\Gamma| \leq \frac{2n - 2}{2^{\left\lfloor\frac{\alpha}{2g}\right\rfloor}}
\leq \left(\frac{1}{2^{\left\lfloor\frac{\alpha}{2g}\right\rfloor - 1}}\right) n,
\end{displaymath}
as there are $2n - 2$ nodes in $T$.

Now, for all $a \notin \Gamma$ assign an arbitrary leaf at evolutionary distance at most $\alpha$.
Then
\begin{eqnarray*}
\left|\Upsilon_\alpha\right|
&\geq& \frac{1}{2}(n - |\Gamma|)\\
&\geq& \frac{1}{2}\left(1 - \frac{1}{2^{\left\lfloor\frac{\alpha}{2g}\right\rfloor - 1}}\right) n,
\end{eqnarray*}
where we divided by $2$ to avoid over-counting.
The result follows from the assumption $\alpha \geq 4g$.
\end{proof}

\paragraph{Sparsification}
Note that $\Upsilon_{4g}$ satisfies C1 but does not satisfy C2 as the pairs
may be intersecting
(see Proposition~\ref{claim:independence}). We now show how to satisfy both C1 and C2 by ``sparsifying''
$\Upsilon_{4g}$. In stating this procedure, we allow some flexibility (that is, arbitrary choices) which will be useful
in analyzing the actual implementation in the next section.
Let $4g < m < M$ be a constant to be determined later and 
assume $\Upsilon'$ is any set
satisfying
\begin{equation*}
\Upsilon_{4g} \subseteq \Upsilon' \subseteq \Upsilon_m.
\end{equation*}
We know from
Proposition~\ref{claim:size} that $\Upsilon'$ has linear size, that is,
$|\Upsilon'| \geq n/4$.
We construct a linear-sized subset 
$\Upsilon$ of $\Upsilon'$ satisfying the 
non-intersection condition of Proposition~\ref{claim:independence}
as follows. Let $S := \Upsilon'$ and $\Upsilon'' := \emptyset$.
\begin{itemize}
\item Take any pair $(a^*,b^*)$ in $S$ and add it to $\Upsilon''$.

\item Let $S_0$ be any subset of $S$ such that
$S_0$ contains all pairs with at least one node within evolutionary distance $m$
of either $a^*$ or $b^*$ 
and contains no pair with both nodes 
beyond evolutionary distance
$M$ from both $a^*$ and $b^*$. Remove $S_0$ from $S$.

\item Repeat until $S$ is empty.

\item Return $\Upsilon := \Upsilon''$.

\end{itemize}
We claim that $\Upsilon$ is linear in size and that no two pairs in $\Upsilon$ intersect.
\begin{proposition}[Properties of $\Upsilon$]\label{claim:upsilon2}
Let $\Upsilon$ be any set built by the procedure above.
Then,
\begin{enumerate}
\item
For all $(a,b), (a',b') \in \Upsilon$ with $(a,b) \neq (a',b')$
we have
$\path(a,b) \cap \path(a',b') = \emptyset$.

\item
There is $\gamma_s = \gamma_s(M,f) > 0$ such that
$\left|\Upsilon\right| \geq \gamma_s n$,
where $\gamma_s$ does not depend on $T$,
but only on $M, f$.

\item For all $(a,b) \in \Upsilon$, we have
$$
2f \leq \dist(a,b) \leq M.
$$

\end{enumerate}
\end{proposition}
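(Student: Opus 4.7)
I will handle the three claims separately, with (1) being the main obstacle. Claim (3) is immediate: the upper bound $\dist(a,b) \leq M$ follows from the chain $\Upsilon \subseteq \Upsilon' \subseteq \Upsilon_m \subseteq \Upsilon_M$, while the lower bound $\dist(a,b) \geq 2f$ holds because the path between two distinct leaves in a binary phylogeny traverses at least two edges (each leaf has degree one, so the path must pass through at least one internal vertex), each of weight at least $f$.

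For (1), the key reduction is to show that if $(a^*, b^*)$ is chosen at some step and $(a,b) \in S$ at that step satisfies $\path(a^*, b^*) \cap \path(a, b) \neq \emptyset$, then $(a,b)$ must be placed in $S_0$ and removed, so it can never be chosen later. Pick $v$ in the intersection. Additivity of the tree metric along paths gives $\dist(a^*, v) + \dist(v, b^*) = \dist(a^*, b^*) \leq m$ and $\dist(a, v) + \dist(v, b) = \dist(a, b) \leq m$, so after possibly relabelling we may assume $\dist(a^*, v) \leq m/2$ and $\dist(a, v) \leq m/2$. The triangle inequality then yields $\dist(a^*, a) \leq m$, so $(a,b)$ has a node within evolutionary distance $m$ of $a^*$ and must be placed in $S_0$ by the first clause of the construction. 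Hence for any two distinct pairs in $\Upsilon$, applying the reduction to the earlier-selected one shows that their paths are disjoint.

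For (2), I will bound $|S_0|$ per iteration. By the second clause of the construction, every pair in $S_0$ has at least one endpoint within evolutionary distance $M$ of $a^*$ or $b^*$. Since any $(c, d) \in S_0 \subseteq \Upsilon'$ satisfies $\dist(c, d) \leq m \leq M$, both endpoints lie in the ball of evolutionary radius $2M$ centred at $a^*$ or at $b^*$. Using the lower bound $f$ on edge weights and the fact that the maximum vertex degree is $3$, such a ball contains at most $N(M, f) := 3 \cdot 2^{\lceil 2M/f \rceil}$ vertices. Hence $|S_0| \leq 2 \binom{N(M, f)}{2}$. Combining this with $|\Upsilon'| \geq n/4$ from Proposition~\ref{claim:size}, the procedure runs for at least $\gamma_s n$ iterations with $\gamma_s := 1 / (8 \binom{N(M, f)}{2})$, depending only on $M$ and $f$. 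The main conceptual point is (1): tree-metric additivity together with the midpoint observation converts path intersection into a distance bound that automatically triggers removal by the first clause, so the flexibility in choosing $S_0$ is harmless.
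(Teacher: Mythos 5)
Your proof is correct; the only genuine difference from the paper is in part (1). For the disjointness claim the paper first observes that any two surviving pairs satisfy $\dist(a,b),\dist(a',b')\leq m$ while all four cross-distances are at least $m$, so that $\dist(a,b)+\dist(a',b')-\dist(a,a')-\dist(b,b')\leq 0$, and then invokes the four-point condition with strictly positive edge weights to rule out the splits $aa'|bb'$ and $ab'|a'b$, i.e.\ to rule out path intersection. You prove the contrapositive instead: a shared vertex $v$, together with additivity along each path and the midpoint choice, manufactures a cross-distance $\dist(a^*,a)\leq m$, which the first clause of the removal rule then detects, so an intersecting pair can never survive to be selected. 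Your route is more elementary (no four-point condition needed) and ties the metric fact directly to the algorithm's removal clause; the paper's route isolates the purely metric implication ``small within-pair distances plus large cross-distances imply edge-disjoint paths.'' Both arguments rest on the same underlying fact. For part (2) your counting is the same idea as the paper's --- bound the number of pairs destroyed per iteration by a constant depending only on $M$ and $f$ --- just with a coarser constant (all pairs inside radius-$2M$ balls, versus the paper's count of leaves within $M$ times partners within $m$, giving $2^{2\lfloor M/f\rfloor}$); this is harmless for the proposition as stated, though note the paper later fixes the specific value $\gamma_s = 2^{-2\lfloor M/f\rfloor-2}$ in the definition of $\gamma_s$-sparseness, so if you wanted to match that definition verbatim you would substitute the paper's sharper count. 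Part (3) is identical.
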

\begin{proof}
We first prove the non-intersecting condition.
All pairs of leaves in $\Upsilon$ are at 
evolutionary distance at most $m$.
Moreover, for any $(a,b)\neq (a',b')$ in $\Upsilon$, we have by construction
\begin{displaymath}
\min\{\dist(u,v):u\in\{a,b\},v\in\{a',b'\}\} \geq m.
\end{displaymath}
Hence, the path between $a$ and $b$ and the path between $a'$ and $b'$ cannot intersect: we have
$$
\dist(a,b) + \dist(a',b') - \dist(a,a') - \dist(b,b') 
\leq 0,
$$
which, using $\weight_e > 0$ for all $e$
and the four-point test 
(see e.g.~\cite{SempleSteel:03}),
excludes the topology $aa'|bb'$ (that is, the 
four-leaf topology
where $\{a,a'\}$ is one side of the internal edge
and $\{b,b'\}$ is on the other); and similarly 
for the topology $ab'|a'b$.

We now bound the size of $\Upsilon$.
Let $(a,b)$ be a pair of leaves at evolutionary
distance at most $m$. There are at most
$2\cdot 2^{\left\lfloor\frac{M}{f}\right\rfloor-1} = 2^{\left\lfloor\frac{M}{f}\right\rfloor}$ leaves at evolutionary distance 
at most $M$ from either $a$ or $b$.
Therefore, at each iteration of the sparsification algorithm, 
the number of
elements of $S$ removed is at most $2^{\left\lfloor\frac{M}{f}\right\rfloor + \left\lfloor\frac{m}{f}\right\rfloor - 1} \leq 2^{2\left\lfloor\frac{M}{f}\right\rfloor}$,
as each leaf removed is involved in at most $2^{\left\lfloor\frac{m}{f}\right\rfloor-1}$ pairs at evolutionary distance $m$.
Since by Proposition~\ref{claim:size}, the size of $\Upsilon'$ is at least $n/4$,
the number of elements in $\Upsilon$ at the end
of the sparsification algorithm
is at least 
$$
|\Upsilon|
\geq \frac{n}{2^{2\left\lfloor\frac{M}{f}\right\rfloor + 2}}.
$$

Finally, by our assumption on the phylogeny,
two distinct leaves are always at evolutionary distance at
least $2f$. For the upper bound, use $m < M$. 
\end{proof}
Define
$$
\gamma_s = \gamma_s(M,f) = \frac{1}{2^{2\left\lfloor\frac{M}{f}\right\rfloor + 2}}.
$$
\begin{definition}[Sparse pairs]
We say that a set $\Upsilon \subseteq \nsqneq$ 
is {\em $\gamma_s$-sparse} if it satisfies the three
properties in the statement of 
Proposition~\ref{claim:upsilon2}.
\end{definition}

\subsection{Properties of the site clustering statistic}

In (\ref{eq:ucal}) fix an $\gamma_s$-sparse $\Upsilon$.
We now show that conditions C1 and C2 lead to concentration.
Let 
$$
U_\Upsilon = \expec[\ucal_i]
= \frac{1}{|\Upsilon|} \sum_{(a,b) \in \Upsilon} \E\left[e^{-\Lambda_i\dist(a,b)}\right]
= \frac{1}{|\Upsilon|} \sum_{(a,b) \in \Upsilon} \mgf(\dist(a,b)).
$$
Moreover,
for $\lambda \geq 0$, define
$$
U_\Upsilon(\lambda)
= \frac{1}{|\Upsilon|} \sum_{(a,b) \in \Upsilon} e^{-\lambda\dist(a,b)}
$$
and note that
$$
U_\Upsilon(\Lambda_i)
= \E[\ucal_i\,|\,\Lambda_i],
$$
and
$$
U_\Upsilon = \E\left[U_\Upsilon(\Lambda_i)\right],
$$
for $i = 1,\ldots, k$.
\begin{proposition}[Concentration of $\ucal_i$]\label{claim:concentration}
For all $\zeta > 0$, there is $c > 0$ depending
on $M, f$ such that
\begin{equation*}
\prob\left[|\ucal_i - U_\Upsilon(\Lambda_i)| \geq \zeta \,|\,\Lambda_i\right]
\leq 2 \exp(-c \zeta^2 n),
\end{equation*}
almost surely, for all $i = 1,\ldots,k$.
(We will eventually use $\zeta = o(1)$.)
\end{proposition}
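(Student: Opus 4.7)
The plan is to observe that, once we condition on $\Lambda_i$, the statistic $\ucal_i$ is a bounded sum of independent random variables, so concentration follows from a standard Hoeffding bound. The factor $n$ in the exponent will come from the linear lower bound on $|\Upsilon|$ given by Proposition~\ref{claim:upsilon2}.

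First I would fix $\Lambda_i = \lambda$ and note that the $i$-th site then evolves according to the Poisson model on the scaled phylogeny $\lambda T$, which has the same topology and the same sets of path-edges as $T$. Since $\Upsilon$ is $\gamma_s$-sparse, Proposition~\ref{claim:upsilon2} gives $\path(a,b)\cap\path(a',b')=\emptyset$ for all distinct $(a,b),(a',b')\in\Upsilon$. Hence Proposition~\ref{claim:independence} applies (to $\lambda T$) and the indicators $\{\ind\{\sigma^i_a=\sigma^i_b\}\}_{(a,b)\in\Upsilon}$ are mutually independent conditional on $\Lambda_i=\lambda$. The conditional expectation of each indicator was already computed in the derivation of $U_\Upsilon(\Lambda_i)$, so
$$
\E[\ucal_i \mid \Lambda_i] \;=\; U_\Upsilon(\Lambda_i) \quad\text{a.s.}
$$

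Next I would bound each summand. The term indexed by $(a,b)$ in (\ref{eq:ucal}), after the $1/|\Upsilon|$ prefactor, takes values in an interval of length at most $p_\infty^{-1}/|\Upsilon|$, since $\ind\{\sigma^i_a=\sigma^i_b\}\in\{0,1\}$ and the $-q_\infty$ centering is deterministic. Conditioned on $\Lambda_i$, Hoeffding's inequality applied to the sum of $|\Upsilon|$ independent summands therefore yields
$$
\P\!\left[\,|\ucal_i - U_\Upsilon(\Lambda_i)| \geq \zeta \,\bigm|\, \Lambda_i\right]
\;\leq\; 2\exp\!\left(-\,2\,p_\infty^{2}\,|\Upsilon|\,\zeta^{2}\right).
$$
Finally, sparsity gives $|\Upsilon|\geq\gamma_s(M,f)\,n$, so setting $c := 2\,p_\infty^{2}\,\gamma_s(M,f)$ (a constant depending only on $M,f$ and on $r$ through $p_\infty$) gives the claimed bound, uniformly in $i$ and almost surely in $\Lambda_i$.

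The only mildly delicate point is the conditioning: one must note that Proposition~\ref{claim:independence} is a statement about a fixed tree, whereas here the tree is effectively the random tree $\Lambda_i T$. Because topology and path-incidence are invariant under scaling, and because conditioning on $\Lambda_i$ freezes the branch lengths, the proposition applies verbatim once we condition; no additional work is needed beyond this observation.
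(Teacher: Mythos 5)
Your proof is correct and follows essentially the same route as the paper's: condition on $\Lambda_i$, invoke the disjoint-paths independence (Proposition~\ref{claim:independence}) together with the linear lower bound $|\Upsilon| \geq \gamma_s n$ from Proposition~\ref{claim:upsilon2}, and apply a Hoeffding-type bound to the normalized sum of bounded independent indicators. Your explicit constant $c = 2p_\infty^2\gamma_s(M,f)$ and your remark that the scaling $\lambda T$ preserves topology (so the independence proposition applies after conditioning) are slightly more careful than the paper's one-line application of its Azuma--Hoeffding lemma, but the argument is the same.
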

\begin{proof}
Recall the following standard concentration inequality (see e.g.~\cite{MotwaniRaghavan:95}):
\begin{lemma}[Azuma-Hoeffding Inequality]\label{lemma:azuma}
Suppose $X=(X_1,\ldots,X_m)$ are independent random variables taking values in a set
$S$, and $f:S^m \to \real$ is any $t$-Lipschitz function:
$|f(\bfx) - f(\bfy)|\leq t$ whenever $\bfx$ and $\bfy$ differ at just one coordinate. Then,
$\forall \zeta > 0$,
\begin{equation*}
\prob\left[|f(X) - \expec[f(X)]| \geq \zeta \right]
\leq 2\exp\left(-\frac{\zeta^2 }{2 t^2 m}\right).
\end{equation*}
\end{lemma}
From Propositions~\ref{claim:independence},~\ref{claim:size},
and~\ref{claim:upsilon2}, the random variable
$\ucal_i$ is a (normalized) sum of $\Omega(n)$ independent 
bounded variables.
By Lemma~\ref{lemma:azuma}, conditioning on
$\Lambda_i$,
we have $|U_\Upsilon(\Lambda_i) - \ucal_i| \leq \zeta$ except with probability $\exp(-\Omega(\zeta^2 n))$, where we used that
$m = \Omega(n)$ and $t = O(1/n)$.
\end{proof}

Moreover, we show separation. 
\begin{proposition}[Separation of $\ucal_i$]\label{claim:separation}
If $\lambda - \lambda' \geq \beta$, where $\beta \geq 0$,
then
$$
U_\Upsilon(\lambda') - U_\Upsilon(\lambda) 
\geq e^{-\lambda M} \left(
e^{2f \beta} - 1
\right).
$$
\end{proposition}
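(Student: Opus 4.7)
The plan is to unfold the definition of $U_\Upsilon(\lambda)$ as a uniform average over the sparse collection $\Upsilon$ and exploit the two-sided control $2f \le \dist(a,b) \le M$ guaranteed for every pair $(a,b) \in \Upsilon$ by item 3 of Proposition~\ref{claim:upsilon2}. Writing
$$
U_\Upsilon(\lambda') - U_\Upsilon(\lambda)
= \frac{1}{|\Upsilon|} \sum_{(a,b)\in\Upsilon}\Bigl(e^{-\lambda' \dist(a,b)} - e^{-\lambda \dist(a,b)}\Bigr),
$$
it suffices to prove a pointwise lower bound of $e^{-\lambda M}(e^{2f\beta}-1)$ on each summand, since averaging then preserves it.

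For a single pair write $d = \dist(a,b)$ and factor
$$
e^{-\lambda' d} - e^{-\lambda d}
= e^{-\lambda d}\bigl(e^{(\lambda-\lambda')d} - 1\bigr).
$$
Since $\lambda-\lambda' \ge \beta \ge 0$ and $d>0$, the exponential is monotone in its exponent, so $e^{(\lambda-\lambda')d} - 1 \ge e^{\beta d} - 1 \ge 0$. Using $d \ge 2f$ and the monotonicity of $x \mapsto e^{\beta x}-1$ for $\beta\ge 0$, we bound $e^{\beta d} - 1 \ge e^{2f\beta}-1$. Finally, since scaling factors (and thus $\lambda$) are nonnegative and $d \le M$, we have $e^{-\lambda d} \ge e^{-\lambda M}$. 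Multiplying the two estimates yields
$$
e^{-\lambda' d} - e^{-\lambda d} \;\ge\; e^{-\lambda M}\bigl(e^{2f\beta}-1\bigr),
$$
and averaging over $\Upsilon$ gives the claim.

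There is no real obstacle here: the proof is a short chain of inequalities, and the only inputs used are the lower bound $2f$ on intra-pair distances (which comes from the regularity assumption $T \in \tcal_{f,g}$ entering item 3 of Proposition~\ref{claim:upsilon2}), the upper bound $M$ (which comes from the sparsification radius), and nonnegativity of $\lambda, \beta$. The factor $e^{-\lambda M}$ comes from the worst-case (largest) pair distance and the factor $e^{2f\beta}-1$ from the best-case (smallest) pair distance. The reason the range $[2f,M]$ must be controlled from both sides is precisely to prevent the pointwise bound from collapsing either at short pairs (where differences in $\lambda$ produce no separation) or at long pairs (where the prefactor $e^{-\lambda d}$ would decay too much). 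This is exactly what the sparsification construction was designed to furnish.
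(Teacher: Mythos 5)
Your proof is correct and follows essentially the same chain of inequalities as the paper's: factor out $e^{-\lambda \dist(a,b)}$, use $\lambda-\lambda'\ge\beta$ and $\dist(a,b)\ge 2f$ to lower-bound the bracket by $e^{2f\beta}-1$, and use $\dist(a,b)\le M$ with $\lambda\ge 0$ to lower-bound the prefactor by $e^{-\lambda M}$. The only (immaterial) difference is that the paper first replaces $\lambda'$ by $\lambda-\beta$ before factoring, whereas you factor first and then bound.
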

\begin{proof}
We have
\begin{eqnarray*}
U_\Upsilon(\lambda')
- U_\Upsilon(\lambda)
&=& \frac{1}{|\Upsilon|} \sum_{(a,b) \in \Upsilon} 
\left[e^{-\lambda'\dist(a,b)} - e^{-\lambda\dist(a,b)}\right]\\
&\geq& \frac{1}{|\Upsilon|} \sum_{(a,b) \in \Upsilon} 
\left[e^{-(\lambda - \beta)\dist(a,b)} - e^{-\lambda\dist(a,b)}\right]\\
&\geq& \frac{1}{|\Upsilon|} \sum_{(a,b) \in \Upsilon} 
e^{-\lambda\dist(a,b)}\left[e^{\beta\dist(a,b)} - 1\right]\\
&\geq& \frac{1}{|\Upsilon|} \sum_{(a,b) \in \Upsilon} 
e^{-\lambda M}\left[e^{\beta \cdot 2f} - 1\right],
\end{eqnarray*}
since $2f \leq \dist(a,b) \leq M$ for all
$(a,b) \in \Upsilon$ by assumption.
\end{proof}
\begin{remark}
The last bound may seem problematic because
under our assumptions 
the scaling factor is allowed to have an unbounded support.
In that case the RHS could be arbitrarily close to $0$. 
But we will show below that we can safely ignore
large values of $\lambda$. 
\end{remark}

\section{Constructing 
the site clustering statistic 
from data}
\label{section:constructing}

Note that in the previous
section we only established the {\em existence}
of an appropriate site clustering statistic.
We now show how such a statistic 
can be built from data 
{\em without knowledge of the tree topology or site scaling factors}. 

\paragraph{Notation} 
We use the same notation as in the previous section.
Further,
we let
\begin{displaymath}
\hat q(a,b) = \frac{1}{k}\sum_{i=1}^k
p_\infty^{-1}\left[\ind\{\sigma^i_a = \sigma^i_b\}
- q_\infty\right].
\end{displaymath}
Also let
\begin{eqnarray*}
q(a,b)
&=& \E[\hat q(a,b)]\\
&=& \E\left[p_\infty^{-1}\left[\ind\{\sigma^1_a = \sigma^1_b\}
- q_\infty\right]\right]\\
&=& \E\left[\E\left[p_\infty^{-1}\left[\ind\{\sigma^1_a = \sigma^1_b\}
- q_\infty\right]\,|\,\Lambda_1\right]\right]\\
&=& \E\left[e^{-\Lambda_1 \dist(a,b)}\right]\\
&=& \mgf(\dist(a,b)).
\end{eqnarray*}
where we used our previous calculations.
We define some constants used in the algorithm
and its analysis
whose values will be justified below.
Let
\begin{displaymath}
\omega_m = e^{-5g}, \qquad \omega_m^+ = e^{-5.5 g},
\qquad \omega_m^- = e^{-4.5 g}.
\end{displaymath}
Also recall that $\mgf$ is strictly decreasing and let
$$
m = \mgf^{-1}(\omega_m).
$$
Note that by Jensen's inequality
and $\E[\Lambda_1] = 1$
$$
\mgf(5g) \geq e^{-\E[\Lambda_1] \cdot 5g} = e^{-5g},
$$
so that
$$
m \geq 5g > 4g,
$$
as assumed in the previous section.
Similarly, by assumption,
$$
m = \mgf^{-1}(5g) < \mgf^{-1}(e^{-6g}) \leq M,
$$
so that $m < M$. Finally let
$$
\eta
= \min\left\{
e^{-4g} - e^{-4.5g},
e^{-4.5g} - e^{-5g},
e^{-5g} - e^{-5.5g},
e^{-5.5g} - e^{-6g}
\right\}.
$$

\subsection{Site clustering algorithm}

We proceed in three steps, as in the idealized setting of Section~\ref{section:exist}.
However, unlike the idealized setting,
we do {\em not} assume the knowledge of 
evolutionary distances. 
Note in particular that it is not possible to estimate
$\dist(a,b)$ from the samples $(\sigma^{i}_{L})_{i=1}^k$
because the rate distribution is unknown.
Instead, we use
$\hat q(a,b)$ as a {\em rough} estimate
of how close $a$ and $b$ are in the tree.
This will suffice for our purposes, as we show
in the next subsection.
The algorithm is the following:
\begin{enumerate}

\item \emph{(Close Pairs)}
For all pairs of leaves $a,b \in [n]$, compute $\hat q(a,b)$ and set
\begin{displaymath}
\Upsilon' = \{(a,b) \in \nsqneq\ :\ \hat q(a,b) \geq \omega_m^-\}.
\end{displaymath}

\item \emph{(Sparsification)}
Let $S := \Upsilon'$ and $\Upsilon'' := \emptyset$.
\begin{itemize}
\item Take any pair $(a^*,b^*)$ in $S$ and add it to $\Upsilon''$.

\item Remove from $S$ all pairs $(a,b)$ such that
\begin{displaymath}
\max\{\hat q(c^*, c)\ :\ c^*\in \{a^*,b^*\}, c\in\{a,b\}\} \geq \omega_m^+.
\end{displaymath}

\item Repeat until $S$ is empty.

\end{itemize}

\item \emph{(Final Statistic)}
Return $\Upsilon := \Upsilon''$. 

\end{enumerate}

\subsection{Analysis of the clustering algorithm}

Let $\Upsilon$ be the set
returned by the previous algorithm.
We show that 
it is $\gamma_s$-sparse
with high probability.
\begin{proposition}[Clustering statistic]
\label{proposition:clusteringstatistic}
Under Assumption~\ref{assump:general},
for all $0 < \delta < 1$ there exists
a constant $0 < C < +\infty$ (depending
on $g$) such that the set of pairs $\Upsilon$
returned by the previous algorithm
is $\gamma_s$-sparse with probability $1 - \delta$
provided that the number of samples 
satisfies
$$
k \geq C \log n.
$$
Moreover, the algorithm runs in polynomial time.
\end{proposition}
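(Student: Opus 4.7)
The plan is to reduce the analysis to a single high-probability ``good event'' on which all empirical agreement statistics $\hat q(a,b)$ lie within $\eta$ of their true expectations $q(a,b) = \mgf(\dist(a,b))$, and then verify deterministically that on this good event the data-driven algorithm makes exactly the same structural decisions as the idealized sparsification procedure from Section~\ref{section:exist}. Once this reduction is in place, Proposition~\ref{claim:upsilon2} delivers the three $\gamma_s$-sparsity properties.

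First I would handle concentration. Each $\hat q(a,b)$ is an average over $k$ i.i.d.\ sites of a bounded quantity (lying in $[-q_\infty/p_\infty, 1/p_\infty]$), so Hoeffding gives
\[
\P\bigl[|\hat q(a,b) - q(a,b)| \geq \eta\bigr] \leq 2 \exp(-c' k \eta^2),
\]
for a constant $c'$ depending on $p_\infty$. A union bound over the $\binom{n}{2}$ pairs shows that, with $k \geq C\log n$ for $C = C(g,\delta,p_\infty)$ sufficiently large, all $|\hat q(a,b)-q(a,b)| < \eta$ simultaneously, except with probability $\delta$. Call this event $\ecal$; I work on $\ecal$ from now on.

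Next I would check the inclusions that justify Step~1. If $\dist(a,b)\leq 4g$, then by Jensen $q(a,b)=\mgf(\dist(a,b))\geq e^{-4g}$, so on $\ecal$ we have $\hat q(a,b)\geq e^{-4g}-\eta \geq e^{-4.5g}=\omega_m^-$, hence $(a,b)\in\Upsilon'$. Conversely, if $\dist(a,b) > m$, then $q(a,b)<\omega_m=e^{-5g}$ and $\hat q(a,b) < e^{-5g}+\eta \leq e^{-4.5g}=\omega_m^-$, so $(a,b)\notin\Upsilon'$. Thus $\Upsilon_{4g}\subseteq \Upsilon'\subseteq \Upsilon_m$, matching the idealized set of Section~\ref{section:exist}. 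For Step~2, the threshold $\omega_m^+=e^{-5.5g}$ is designed so that (on $\ecal$) the removal rule pulls out all pairs with a node within evolutionary distance $m$ of $\{a^*,b^*\}$ (since such a pair has $q\geq e^{-5g}$ and thus $\hat q \geq e^{-5g}-\eta \geq \omega_m^+$), while never removing a pair whose endpoints are both beyond distance $M$ from $\{a^*,b^*\}$ (since then $q\leq e^{-6g}$ and $\hat q\leq e^{-6g}+\eta \leq \omega_m^+$). This is exactly the freedom permitted in the idealized sparsification loop.

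Combining the two steps, on $\ecal$ the output $\Upsilon$ is a legal outcome of the procedure analyzed in Proposition~\ref{claim:upsilon2}, hence $\gamma_s$-sparse. The runtime is clearly polynomial: computing all $\hat q(a,b)$ takes $O(n^2 k)=O(n^2 \log n)$ time, and the sparsification loop performs at most $O(n^2)$ iterations each of cost $O(n^2)$. The only real subtlety is the quantitative bookkeeping that lines up $\omega_m, \omega_m^{\pm}, \eta$ and $m$ with $4g, M$ and the Jensen bound; this is the main (and essentially only) obstacle, and it is cleanly resolved by the choice of $\eta$ as the minimum of the four threshold gaps and by the inequalities $5g>4g$ and $m<M$ already verified just before the algorithm.
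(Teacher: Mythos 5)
Your proposal is correct and follows essentially the same route as the paper's proof: a Hoeffding-plus-union-bound argument establishing $|\hat q(a,b)-q(a,b)|\leq\eta$ for all pairs with $k\geq C\log n$, followed by the deterministic verification that on this event the thresholds $\omega_m^-$ and $\omega_m^+$ force the algorithm to realize a legal instance of the idealized sparsification procedure, so that Proposition~\ref{claim:upsilon2} applies. The quantitative bookkeeping with $\eta$, the Jensen bound, and the four threshold gaps matches the paper's argument exactly.
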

\begin{proof}
We first prove that all $\hat q(a,b)$'s
are sufficiently accurate.
\begin{lemma}
For all $0 < \delta < 1$, there exists
a constant $0 < C < +\infty$ (depending
on $g$) such that 
$$
\left|\hat q(a,b) - q(a,b)\right|
\leq \eta,
$$
for all $(a,b) \in \nsqneq$
with probability $1 - \delta$
provided that the number of samples 
satisfies
$$
k \geq C \log n.
$$
\end{lemma}
\begin{proof}
For each $(a,b) \in \nsqneq$, $\hat q(a,b)$
is a sum of $k$ independent bounded variables.
By Lemma~\ref{lemma:azuma}, taking $\zeta = \eta$
we have 
$$
|\hat q(a,b) - q(a,b)| \leq \eta,
$$ 
except with probability $2\exp(-C' k)$
for some $C' > 0$ depending on $p_\infty$
and $\eta$.
Note that there are at most $n^2$ 
elements in $\nsqneq$ so that
the probability of failure is at most
$$
2 n^2 \exp(- C' \cdot C \log n) \leq \delta,
$$
for $C$ sufficiently large.
\end{proof}
We return to the proof of Proposition~\ref{proposition:clusteringstatistic}.
Assume that the conclusion of the previous lemma
holds.
Our goal is to prove that the site clustering algorithm
then follows the idealized sparsification procedure described
in Section~\ref{section:existence}.
\begin{enumerate}
\item
We first prove that the set
\begin{displaymath}
\Upsilon' = \{(a,b) \in \nsqneq\ :\ \hat q(a,b) \geq \omega_m^-\}.
\end{displaymath}
satisfies
\begin{equation*}
\Upsilon_{4g} \subseteq \Upsilon' \subseteq \Upsilon_m.
\end{equation*}
Let $(a,b)$ be such that $\dist(a,b) \leq 4g$. 
Then
$$
q(a,b) = \mgf(\dist(a,b)) \geq \mgf(4g) \geq e^{-4g},
$$
by monotonicity and Jensen's inequality.
Hence
$$
\hat q(a,b) 
\geq e^{-4g} - \eta 
\geq e^{-4g} - \left(e^{-4g} - e^{-4.5g}\right)
\geq e^{-4.5g}
= \omega_m^-,
$$
and
$\Upsilon_{4g} \subseteq \Upsilon'$.

Similarly, let $(a,b)$ be such that
$\dist(a,b) > m$. Then
$$
q(a,b) = \mgf(\dist(a,b)) < \mgf(m) = \omega_m,
$$
and
$$
\hat q(a,b) < \omega_m + \eta 
\leq e^{-5g} + \left(e^{-4.5g} - e^{-5g}\right)
= \omega_m^-,
$$
so that $\Upsilon' \subseteq \Upsilon_m$.

\item Let $\Upsilon''$ be the set
obtained during one of the iterations of Step 2 of the site clustering algorithm
and fix a pair $(a^*,b^*) \in \Upsilon''$.
We need to show that the set $S_0$ of pairs $(a,b)$ 
in $\Upsilon''$ such
that
\begin{equation}\label{eq:omegamplus}
\max\{\hat q(c^*, c)\ :\ c^*\in \{a^*,b^*\}, c\in\{a,b\}\} \geq \omega_m^+
\end{equation}
is such that it
contains all pairs with at least one node within evolutionary distance $m$
of either $a^*$ or $b^*$ 
and contains no pair with both nodes 
beyond evolutionary distance
$M$ from both $a^*$ and $b^*$.
In the first case, assume w.l.o.g.~that
$$
\dist(a,a^*) \leq m.
$$
Then, arguing as above,
$$
\hat q(a,a^*) 
\geq e^{-5g} - \left(e^{-5g} - e^{-5.5g}\right) 
= \omega_m^+,
$$
and (\ref{eq:omegamplus}) is satisfied.
In the second case,
for all $c \in \{a,b\}$
and $c^* \in \{a^*,b^*\}$
$$
\dist(c,c^*) > M,
$$
and
$$
\hat q(c,c^*) < e^{-6g} + \left(e^{-5.5g} - e^{-6g}\right)
= \omega_m^+,
$$
so that (\ref{eq:omegamplus}) is not satisfied.

\end{enumerate}
The two properties above guarantee that
the algorithm constructs a set $\Upsilon$
as in the idealized sparsification procedure
of Section~\ref{section:existence}.
In particular, $\Upsilon$ is $\gamma_s$-sparse
by Proposition~\ref{claim:upsilon2}.
\end{proof}

\section{Tree reconstruction}\label{section:full}

We now show how to use our site clustering 
statistic to build the tree itself. The algorithm is composed of
two steps: we first ``bin'' the sites according
to the value of the clustering statistic; we then use the sites in
one of those bins and apply a standard distance-based
reconstruction method. By taking the bins 
sufficiently small, we show that the content 
of the bins is made of
sites with almost identical scaling factor---thus
essentially reducing the situation to the unmixed case.

Throughout this section, we assume that there
is a $\gamma_k > 0$ such that $k = n^{\gamma_k}$. We also
assume that $\Upsilon$ is $\gamma_s$-sparse,
that $\ucal$ stands for a copy of the corresponding
clustering statistic under scaling factor $\Lambda$,
and that Assumption~\ref{assump:general}
is satisfied.

\subsection{Site binning}

\paragraph{Ignoring small and large scaling factors}
We first show that, under Assumption~\ref{assump:general}, the scaling factor has non-negligible mass between two bounded values.
\begin{proposition}[Bounding the scaling factor]\label{prop:extreme1}
We have$$
\P\left[\minlambda \leq \Lambda \leq \maxlambda\right]
\geq \chi,
$$
where
$$
\minlambda = \frac{g}{M},
$$
$$
\maxlambda = \frac{2}{1 - e^{-5g}},
$$
and
$$
\chi = \frac{1 - e^{-5g}}{2}.
$$
\end{proposition}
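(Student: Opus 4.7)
The plan is to bound the probability of the complement by splitting it into two pieces: $\P[\Lambda > \maxlambda]$ and $\P[\Lambda < \minlambda]$, controlling the upper tail via Markov's inequality and the lower tail via the moment generating function bound from Assumption~\ref{assump:general}.

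For the upper tail, since $\E[\Lambda] = 1$, Markov's inequality gives
\[
\P[\Lambda > \maxlambda] \leq \frac{\E[\Lambda]}{\maxlambda} = \frac{1 - e^{-5g}}{2}.
\]
This is precisely why $\maxlambda$ was chosen to be $2/(1-e^{-5g})$: it makes the upper tail contribute exactly half of $1 - \chi$.

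For the lower tail, I would use the second part of Assumption~\ref{assump:general}, which states that $\mgf^{-1}(e^{-6g}) \leq M$, and hence (since $\mgf$ is decreasing) $\mgf(M) \leq e^{-6g}$. The key observation is that on the event $\{\Lambda < \minlambda\}$, we have $e^{-M\Lambda} > e^{-M\minlambda} = e^{-g}$, so
\[
e^{-6g} \geq \mgf(M) = \E[e^{-M\Lambda}] \geq e^{-g}\,\P[\Lambda < \minlambda],
\]
yielding $\P[\Lambda < \minlambda] \leq e^{-5g}$. Again, this is exactly why $\minlambda$ was set to $g/M$: the factor $e^{-M \minlambda} = e^{-g}$ combines cleanly with the bound $\mgf(M) \leq e^{-6g}$.

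Combining the two estimates,
\[
\P[\minlambda \leq \Lambda \leq \maxlambda] \geq 1 - e^{-5g} - \frac{1 - e^{-5g}}{2} = \frac{1 - e^{-5g}}{2} = \chi,
\]
which is the claim. There is no real obstacle here; the proof is essentially a two-line computation once one recognizes that Assumption~\ref{assump:general} is calibrated precisely so that a Chebyshev-style lower bound on $\Lambda$ (via the Laplace transform) and Markov's upper bound on $\Lambda$ each contribute at most $(1-e^{-5g})/2$ to the failure probability.
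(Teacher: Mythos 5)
Your proof is correct and follows essentially the same route as the paper: Markov's inequality with $\E[\Lambda]=1$ for the upper tail, and the Laplace-transform bound $e^{-6g} \geq \mgf(M) \geq e^{-M\minlambda}\,\P[\Lambda < \minlambda]$ (the paper phrases this via the $\eps=g/M$, $\delta \leq e^{-(6g-\eps M)}$ computation from its Remark on the mass near zero) for the lower tail. No differences worth noting.
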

\begin{proof}
From our convention
that $\E[\Lambda] = 1$,
Markov's inequality implies that
$$
\P[\Lambda \geq \maxlambda] 
\leq \frac{1}{\maxlambda}
= \frac{1-e^{-5g}}{2}.
$$

For the other direction, we reproduce
the argument in Remark~\ref{rem:zero}.
Recall that we assume that
$$
\mgf^{-1}\left(e^{-6g}\right) \leq \maxd. 
$$
Then the probability mass $\delta$ 
below $\eps$ (for $\eps < 6g/M$) must be such that
$$
\delta \leq e^{-(6g - \eps M)},
$$
since
$$
e^{-6g} \geq \mgf(M) \geq \delta e^{-\eps M}. 
$$
Take $\eps = g/M$ so that $\delta \leq e^{-5g}$.

Then we have
$$
\P\left[\minlambda \leq \Lambda \leq \maxlambda\right]
\geq 1 - \left(e^{-5g}\right) - \left(\frac{1-e^{-5g}}{2}\right)
= \chi,
$$
as desired.
\end{proof}
Translating the previous proposition into 
a statement about $\ucal$-values, 
we obtain the following.
\begin{proposition}[Bounding $\ucal$-values]
\label{prop:extreme2}
Letting $\chi$ be as above, we have
$$
\P\left[\minuu \leq 
U_\Upsilon(\Lambda)
\leq \maxuu\right] \geq \chi,
$$
where
$$
\minuu = e^{- M \maxlambda},
$$
and
$$
\maxuu = e^{-2 f \minlambda}.
$$
\end{proposition}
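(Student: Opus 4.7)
The plan is to reduce this proposition to Proposition~\ref{prop:extreme1} by showing that the event $\{\minlambda \leq \Lambda \leq \maxlambda\}$ is contained in the event $\{\minuu \leq U_\Upsilon(\Lambda) \leq \maxuu\}$. The key tool will be the uniform bounds on pairwise distances within $\Upsilon$ coming from the sparsity assumption.

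Specifically, since $\Upsilon$ is $\gamma_s$-sparse, part 3 of Proposition~\ref{claim:upsilon2} tells us that $2f \leq \dist(a,b) \leq M$ for every $(a,b) \in \Upsilon$. For any fixed $\lambda \geq 0$, the function $x \mapsto e^{-\lambda x}$ is decreasing, so term-by-term in the definition
\begin{equation*}
U_\Upsilon(\lambda) = \frac{1}{|\Upsilon|}\sum_{(a,b)\in \Upsilon} e^{-\lambda \dist(a,b)}
\end{equation*}
we obtain the uniform sandwich $e^{-\lambda M} \leq U_\Upsilon(\lambda) \leq e^{-2f\lambda}$. Now suppose $\minlambda \leq \Lambda \leq \maxlambda$. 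Using the upper bound with $\lambda = \Lambda \geq \minlambda$ and monotonicity of $e^{-2f\lambda}$ in $\lambda$ gives $U_\Upsilon(\Lambda) \leq e^{-2f\Lambda} \leq e^{-2f\minlambda} = \maxuu$. Using the lower bound with $\lambda = \Lambda \leq \maxlambda$ and monotonicity of $e^{-\lambda M}$ in $\lambda$ gives $U_\Upsilon(\Lambda) \geq e^{-\Lambda M} \geq e^{-\maxlambda M} = \minuu$.

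Thus the event $\{\minlambda \leq \Lambda \leq \maxlambda\}$ is contained in the event $\{\minuu \leq U_\Upsilon(\Lambda) \leq \maxuu\}$, and so by Proposition~\ref{prop:extreme1},
\begin{equation*}
\P\!\left[\minuu \leq U_\Upsilon(\Lambda) \leq \maxuu\right] \geq \P\!\left[\minlambda \leq \Lambda \leq \maxlambda\right] \geq \chi.
\end{equation*}
There is no real obstacle here: the proposition is essentially a direct translation of Proposition~\ref{prop:extreme1} through the deterministic, monotone map $\lambda \mapsto U_\Upsilon(\lambda)$, and the only ingredient beyond monotonicity is the uniform distance bound $2f \leq \dist(a,b) \leq M$ guaranteed by $\gamma_s$-sparsity. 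The chosen values $\minuu = e^{-M\maxlambda}$ and $\maxuu = e^{-2f\minlambda}$ are exactly the worst-case evaluations of the sandwich at the endpoints of $[\minlambda, \maxlambda]$.
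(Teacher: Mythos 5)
Your proof is correct and follows essentially the same route as the paper: the sandwich $e^{-\lambda M} \leq U_\Upsilon(\lambda) \leq e^{-2f\lambda}$ obtained from the uniform bounds $2f \leq \dist(a,b) \leq M$ on sparse pairs, followed by an appeal to Proposition~\ref{prop:extreme1}. You merely spell out the event-containment step more explicitly than the paper does.
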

\begin{proof}
Recall that
$$
U_\Upsilon(\Lambda)
= \E[\ucal\,|\,\Lambda]
= \frac{1}{|\Upsilon|} \sum_{(a,b) \in \Upsilon} e^{-\Lambda\dist(a,b)}.
$$
Since 
$$2f \leq \dist(a,b) \leq M$$ 
for all $(a,b) \in \Upsilon$,
we have
$$
e^{-M \Lambda}
\leq \frac{1}{|\Upsilon|} \sum_{(a,b) \in \Upsilon} e^{-\Lambda\dist(a,b)}
\leq e^{-2f \Lambda}.
$$
The result then follows
from Proposition~\ref{prop:extreme1}.
\end{proof}

\paragraph{Binning the sites}
We will now bin the sites whose clustering statistic
lie between $\minuu$ and $\maxuu$.
The previous proposition guarantees that
there is a positive fraction of such sites
in expectation.
Let 
$$\Delta_U = \frac{\gamma_U}{\log n},$$
be the size of the bins in $\ucal$-space,
where $\gamma_U > 0$ is a constant to be fixed later. 
To avoid taking integer parts, we assume
for simplicity that $\maxuu - \minuu$
is a multiple of $\Delta_U$. 
Let 
$$
N_U 
= \frac{\maxuu - \minuu + 2\Delta_U}{\Delta_U},
$$ 
be the number of bins.
(The extra $2 \Delta_U$ in the numerator accounts for estimation
error. See below.)
Note that $\maxuu - \minuu = \Theta(1)$
and therefore $N_U = \Theta(\log n)$.
We proceed as follows:
\begin{itemize}
\item \textbf{(Initialization)} 
For $j = 0, \ldots, N_U$,
\begin{itemize}
\item $\bin_j = \emptyset$.
\end{itemize}
\item \textbf{(Main Loop)} For $i = 1, \ldots, k$,
\begin{itemize}
\item \textit{(Out-of-bounds)} 
If $\ucal_i \notin [\minuu - \Delta_U, \maxuu + \Delta_U)$
then $\bin_0 := \bin_0 \cup \{i\}$.
\item \textit{(Binning)} Else if 
$$
\ucal_i \in [
\minuu-\Delta_U + (j-1) \Delta_U,
\minuu-\Delta_U + j \Delta_U
)
$$
then $\bin_j := \bin_j \cup \{i\}$
and $\bin_{>0} := \bin_{>0} \cup \{i\}$.
\end{itemize}
\end{itemize}
Restating Proposition~\ref{claim:concentration},
we have:
\begin{proposition}[Concentration of $\ucal$-values]\label{lemma:ucalaccurate}
We have
\begin{equation}\label{eq:ucalaccurate}
|\ucal_i - U_\Upsilon(\Lambda_i)| \leq \Delta_U,
\qquad \forall i \in \{1,\ldots, k\},
\end{equation}
except with probability $\exp\left(- \Omega(n/\log^2 n)\right)$. 
\end{proposition}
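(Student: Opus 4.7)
The statement is essentially a uniform version (over all $k$ sites) of the per-site concentration bound already established as Proposition~\ref{claim:concentration}, with the deviation parameter $\zeta$ specialized to $\Delta_U = \gamma_U/\log n$. So the plan is simply to quantify Proposition~\ref{claim:concentration} at this scale and then take a union bound over $i = 1,\ldots,k$.

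First, I would fix an arbitrary $i$ and condition on $\Lambda_i$. By Proposition~\ref{claim:concentration} applied with $\zeta = \Delta_U = \gamma_U/\log n$, there is a constant $c = c(M,f) > 0$ with
$$
\P\!\left[\,|\ucal_i - U_\Upsilon(\Lambda_i)| \geq \Delta_U \,\big|\,\Lambda_i\right]
\;\leq\; 2\exp\!\left(-c\,\Delta_U^2\, n\right)
\;=\; 2\exp\!\left(-\frac{c\gamma_U^2\,n}{\log^2 n}\right),
$$
almost surely. Since the right-hand side does not depend on $\Lambda_i$, I can integrate out $\Lambda_i$ to obtain the same tail bound unconditionally for each fixed $i$.

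Next, I apply a union bound over the $k = n^{\gamma_k}$ sites. Writing
$$
\P\!\left[\exists\,i \in \{1,\ldots,k\}:\ |\ucal_i - U_\Upsilon(\Lambda_i)| > \Delta_U\right]
\;\leq\; 2k\exp\!\left(-\frac{c\gamma_U^2\,n}{\log^2 n}\right)
\;\leq\; 2\exp\!\left(\gamma_k \log n - \frac{c\gamma_U^2\,n}{\log^2 n}\right),
$$
and noting that the polynomial term $\gamma_k \log n$ is dwarfed by $n/\log^2 n$ as $n \to \infty$, the whole expression is $\exp(-\Omega(n/\log^2 n))$, which is the claimed bound.

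There is really no serious obstacle here, as all the work was done in Proposition~\ref{claim:concentration}; the only small point worth noting is that the per-site bound in that proposition holds \emph{conditional on $\Lambda_i$} with a constant that does not depend on $\Lambda_i$, so the union bound works cleanly despite the random scaling factors. The choice of $\Delta_U = \gamma_U/\log n$ is exactly tuned so that the exponent $c\Delta_U^2 n = c\gamma_U^2\, n/\log^2 n$ still dominates $\log k = \gamma_k \log n$, which is why a polylogarithmic bin width (rather than a polynomial one) suffices.
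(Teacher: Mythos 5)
Your proof is correct and follows exactly the paper's argument: apply Proposition~\ref{claim:concentration} with $\zeta = \Delta_U = \gamma_U/\log n$ and union-bound over the $k = n^{\gamma_k}$ sites, noting that $n^{\gamma_k}\exp(-\Omega(n/\log^2 n)) = \exp(-\Omega(n/\log^2 n))$. Your added remark that the conditional tail bound is uniform in $\Lambda_i$, so it can be integrated out before the union bound, is a small but worthwhile clarification that the paper leaves implicit.
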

\begin{proof}
Taking $\zeta = \Delta_U$ in Proposition~\ref{claim:concentration}, 
(\ref{eq:ucalaccurate}) holds except
with probability 
$$
2 n^{\gamma_k} \exp\left(- \Omega(n/\log^2 n)\right)
= \exp\left(- \Omega(n/\log^2 n)\right).
$$
\end{proof}
We first show that each bin contains
sites with roughly the same scaling factor.
We first need a bound on the scaling
factors in $\bin_{>0}$.
(Note that, because we needed that the bounds 
$\minuu$ and $\maxuu$ be independent 
of $\Upsilon$ (which itself depends on unknown
evolutionary distances), 
Proposition~\ref{prop:extreme1} does
not apply directly here.)
\begin{proposition}[Bounds on selected scaling factors]
\label{prop:boundsscaling}
Assume (\ref{eq:ucalaccurate}) holds.
There is $\gamma_U > 0$ small enough so 
that for all $i \in \bin_{>0}$
$$
\frac{fg}{M^2} \leq
\Lambda_i \leq \frac{2 M }{f(1 - e^{-5g})}.
$$
\end{proposition}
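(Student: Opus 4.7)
The plan is to combine the definition of $\bin_{>0}$ with the concentration bound (\ref{eq:ucalaccurate}) and then invert the sandwich bounds on $U_\Upsilon$ coming from $\gamma_s$-sparsity. By construction, $i \in \bin_{>0}$ means $\ucal_i \in [\minuu - \Delta_U, \maxuu + \Delta_U)$. Combining with (\ref{eq:ucalaccurate}) and the triangle inequality immediately yields
\begin{equation*}
\minuu - 2\Delta_U \;\leq\; U_\Upsilon(\Lambda_i) \;\leq\; \maxuu + 2\Delta_U.
\end{equation*}

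Next, since $\Upsilon$ is $\gamma_s$-sparse, every pair $(a,b) \in \Upsilon$ satisfies $2f \leq \dist(a,b) \leq M$, and hence for every $\lambda \geq 0$
\begin{equation*}
e^{-M\lambda} \;\leq\; U_\Upsilon(\lambda) \;=\; \frac{1}{|\Upsilon|}\sum_{(a,b)\in\Upsilon} e^{-\lambda\dist(a,b)} \;\leq\; e^{-2f\lambda}.
\end{equation*}
Plugging $\lambda = \Lambda_i$ and solving, the lower bound on $U_\Upsilon(\Lambda_i)$ gives the upper bound
\begin{equation*}
\Lambda_i \;\leq\; -\tfrac{1}{2f}\ln\bigl(\minuu - 2\Delta_U\bigr) \;=\; -\tfrac{1}{2f}\ln\bigl(e^{-M\maxlambda} - 2\Delta_U\bigr),
\end{equation*}
and the upper bound on $U_\Upsilon(\Lambda_i)$ gives the lower bound
\begin{equation*}
\Lambda_i \;\geq\; -\tfrac{1}{M}\ln\bigl(\maxuu + 2\Delta_U\bigr) \;=\; -\tfrac{1}{M}\ln\bigl(e^{-2f\minlambda} + 2\Delta_U\bigr).
\end{equation*}

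Finally, I would choose $\gamma_U$ small enough that the error terms $2\Delta_U = 2\gamma_U/\log n$ are absorbed into the factor-of-two safety margin present in the claimed bounds. Concretely, as $n$ grows the RHS of the upper bound tends to $\frac{M\maxlambda}{2f} = \frac{M}{f(1 - e^{-5g})}$, which is strictly smaller than the target $\frac{2M}{f(1 - e^{-5g})}$; similarly the RHS of the lower bound tends to $\frac{2f\minlambda}{M} = \frac{2fg}{M^2}$, strictly larger than the target $\frac{fg}{M^2}$. Picking $\gamma_U$ small enough (depending on $f,g,M$) so that both gaps remain positive for every $n \geq 1$ gives the stated inequalities. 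There is no real obstacle here: the argument is a direct monotone inversion, and the only mild subtlety is ensuring that $\gamma_U$ is chosen uniformly in $n$, which is handled by the factor-of-two slack built into the statement.
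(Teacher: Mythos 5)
Your proposal is correct and follows essentially the same route as the paper: combine the bin membership with \eqref{eq:ucalaccurate} to sandwich $U_\Upsilon(\Lambda_i)$ between $\minuu - 2\Delta_U$ and $\maxuu + 2\Delta_U$, use the $\gamma_s$-sparsity bounds $e^{-M\lambda} \leq U_\Upsilon(\lambda) \leq e^{-2f\lambda}$, and absorb the $O(1/\log n)$ error into the factor-of-two slack by taking $\gamma_U$ small. The paper phrases the last step by requiring $\maxuu + 2\Delta_U \leq e^{-f\minlambda}$ and $\minuu - 2\Delta_U \geq e^{-2M\maxlambda}$ rather than via your limiting argument, but this is only a cosmetic difference.
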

\begin{proof}
Since $U_\Upsilon(\Lambda_i) \leq \maxuu + 2\Delta_U$
by (\ref{eq:ucalaccurate}),
we have
\begin{eqnarray*}
\maxuu + 2\Delta_U
&\geq& 
\frac{1}{|\Upsilon|} \sum_{(a,b) \in \Upsilon} e^{-\Lambda_i\dist(a,b)}\\
&\geq& e^{-M \Lambda_i}.
\end{eqnarray*}
Choose $\gamma_U > 0$ small enough,
so that
$$
\maxuu + 2 \Delta_U \leq e^{-f \minlambda}.
$$
Taking logarithms,
$$
\Lambda_i \geq \frac{fg}{M^2}.
$$

Similarly, 
since $U_\Upsilon(\Lambda_i) \geq \minuu - 2\Delta_U$
by (\ref{eq:ucalaccurate}),
we have
\begin{eqnarray*}
\minuu - 2\Delta_U
&\leq& 
\frac{1}{|\Upsilon|} \sum_{(a,b) \in \Upsilon} e^{-\Lambda_i\dist(a,b)}\\
&\leq& e^{- 2 f \Lambda_i}.
\end{eqnarray*}
Choose $\gamma_U > 0$ small enough,
so that
$$
\minuu - 2 \Delta_U \geq e^{-2 M \maxlambda}.
$$
Taking logarithms,
$$
\Lambda_i \leq \frac{2 M }{f(1 - e^{-5g})}.
$$
\end{proof}
For $j\in\{1,\ldots,N_U\}$, 
let
$$
U_j = \minuu - \Delta_U + (j-1+1/2)\Delta_U,
$$
be the midpoint of the $j$-th bin.
Using the fact that $U_\Upsilon(\lambda)$
is strictly decreasing in $\lambda \in \real_+$,
we define $\lambda_j$ as the unique solution to
$$
U_\Upsilon(\lambda_j) 
= U_j,
$$ 
for $j \in \{1,\ldots,N_U\}$.
\begin{proposition}[Bin variation]\label{lemma:variation}
Assume (\ref{eq:ucalaccurate}) holds.
For any $\gamma_\Lambda > 0$,
one can pick $\gamma_U > 0$ small enough
(depending on $M, f, g$)
such that for any $j \in \{1,\ldots, N_U\}$
and $i \in \bin_j$,
$$
\left|\Lambda_i 
- \lambda_j\right|
\leq \frac{\gamma_\Lambda}{\log n}.
$$
\end{proposition}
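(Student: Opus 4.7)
The plan is to combine three ingredients: (a) membership of site $i$ in $\bin_j$ forces $\ucal_i$ to lie within $\Delta_U/2$ of the midpoint $U_j$; (b) the concentration bound~\eqref{eq:ucalaccurate} forces $U_\Upsilon(\Lambda_i)$ to lie within $\Delta_U$ of $\ucal_i$; and (c) Proposition~\ref{claim:separation} converts a lower bound on $|U_\Upsilon(\Lambda_i) - U_\Upsilon(\lambda_j)|$ into a lower bound on $|\Lambda_i - \lambda_j|$, provided the larger of the two scaling factors is bounded above by a constant.

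First, the triangle inequality combined with (a), (b) and the definition $U_\Upsilon(\lambda_j) = U_j$ yields, for every $i \in \bin_j$,
$$
|U_\Upsilon(\Lambda_i) - U_\Upsilon(\lambda_j)| \;\leq\; \tfrac{3}{2}\,\Delta_U \;=\; \frac{3\gamma_U}{2\log n}.
$$
Next, I would verify that both $\Lambda_i$ and $\lambda_j$ admit an upper bound $\overline{C}$ depending only on $M, f, g$. For $\Lambda_i$ this is exactly Proposition~\ref{prop:boundsscaling}. For $\lambda_j$, note that $U_j \geq \minuu - \Delta_U/2$ by construction (the smallest possible midpoint corresponds to $j = 1$), and since $\dist(a,b) \geq 2f$ for every $(a,b)\in\Upsilon$ we have $U_\Upsilon(\lambda_j) \leq e^{-2f \lambda_j}$, so
$$
\lambda_j \;\leq\; -\frac{1}{2f}\log\!\left(\minuu - \tfrac{1}{2}\Delta_U\right),
$$
which is a constant in $M,f,g$ provided $\gamma_U$ is small enough that $\minuu - \Delta_U/2 > 0$.

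Now let $\lambda^\vee = \max(\Lambda_i,\lambda_j)$, $\lambda^\wedge = \min(\Lambda_i,\lambda_j)$, and $\beta = \lambda^\vee - \lambda^\wedge = |\Lambda_i - \lambda_j|$. Applying Proposition~\ref{claim:separation} with $\lambda = \lambda^\vee \leq \overline{C}$ and $\lambda' = \lambda^\wedge$,
$$
|U_\Upsilon(\Lambda_i) - U_\Upsilon(\lambda_j)| \;=\; U_\Upsilon(\lambda^\wedge) - U_\Upsilon(\lambda^\vee) \;\geq\; e^{-\overline{C} M}\bigl(e^{2 f \beta} - 1\bigr) \;\geq\; 2 f\, e^{-\overline{C} M}\,\beta,
$$
using $e^x - 1 \geq x$ for $x \geq 0$. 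Chaining this with the upper bound from the first step gives
$$
\beta \;\leq\; \frac{3 e^{\overline{C} M}}{4 f}\cdot\frac{\gamma_U}{\log n},
$$
and it suffices to choose $\gamma_U$ so that the right-hand side is at most $\gamma_\Lambda/\log n$.

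The only mild subtlety is bookkeeping: the constant $\overline{C}$ and the positivity condition $\minuu - \Delta_U/2 > 0$ both depend on $\gamma_U$, as does the final constraint above. One therefore picks $\gamma_U$ as the minimum of the values required by Proposition~\ref{prop:boundsscaling}, by $\minuu - \Delta_U/2 > 0$, and by the separation estimate just obtained. I do not expect any genuinely hard step here---the proposition is essentially a quantitative inverse to Propositions~\ref{claim:separation} and~\ref{prop:boundsscaling}, and the factor $1/\log n$ in the conclusion matches the $1/\log n$ scale of the bin width, as one would hope.
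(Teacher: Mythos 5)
Your proof is correct and follows essentially the same route as the paper's: combine the bin half-width $\Delta_U/2$ with the concentration bound~(\ref{eq:ucalaccurate}) to get $|U_\Upsilon(\Lambda_i)-U_\Upsilon(\lambda_j)|\leq \tfrac{3}{2}\Delta_U$, then invert via Proposition~\ref{claim:separation} using the constant upper bound on the scaling factor from Proposition~\ref{prop:boundsscaling}. The only difference is cosmetic: where the paper treats the case $U_j\geq U_\Upsilon(\Lambda_i)$ and declares the other ``similar,'' you handle both symmetrically by also bounding $\lambda_j$ above via $U_\Upsilon(\lambda_j)=U_j\geq \minuu-\Delta_U/2$, which is a worthwhile bit of explicitness but not a new idea.
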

\begin{proof}
This follows from Proposition~\ref{claim:separation}.
Assume that
$U_j \geq U_\Upsilon(\Lambda_i)$. 
(The other case is similar.)
Using the upper bound in Proposition~\ref{prop:boundsscaling} and (\ref{eq:ucalaccurate}),
we get
$$
\frac{3}{2} \Delta_U 
\geq
U_j - U_\Upsilon(\Lambda_i) 
\geq e^{-\Lambda_i M} \left(
e^{2f \beta} - 1
\right)
\geq 
\left(
e^{2f \beta} - 1
\right)
\exp\left(-\frac{2 M^2 }{f(1 - e^{-5g})}\right).
$$
Hence, 
$$
\beta  
\leq \frac{1}{2f} \log \left(1 + 
\frac{3\gamma_U\exp\left(\frac{2 M^2 }{f(1 - e^{-5g})}\right) }{2\log n}\right).
$$
\end{proof}
Next, we argue that at least one bin
contains a non-negligible fraction of sites.
We say that a bin $\bin_j$ is {\em abundant}
if 
$$
\left|\bin_{j}\right|
\geq k \frac{\chi}{6 N_U}.
$$
\begin{proposition}[Abundant bin]\label{lemma:abundant}
We have
\begin{equation}\label{eq:abundant}
\exists j^{*} \in \{1,\ldots,N_U\}\mbox{ such that $\bin_{j^*}$ is abundant},
\end{equation}
except with probability $\exp(-\Omega(n^{\gamma_\delta}))$
for some $\gamma_\delta > 0$.
\end{proposition}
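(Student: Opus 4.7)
The plan is to combine a Chernoff-type bound on how many sites have their conditional mean $U_\Upsilon(\Lambda_i)$ in the ``good range'' $[\minuu,\maxuu]$ with the accuracy event (\ref{eq:ucalaccurate}), and then apply pigeonhole across the $N_U = \Theta(\log n)$ bins. The target $k \chi / (6 N_U)$ leaves a comfortable factor of $3$ slack, so no tight bookkeeping is needed.

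First I would introduce the i.i.d.\ Bernoulli indicators
$$
X_i = \ind\{\minuu \leq U_\Upsilon(\Lambda_i) \leq \maxuu\}, \qquad i=1,\ldots,k,
$$
which depend only on $\Lambda_i$. Proposition~\ref{prop:extreme2} gives $\E[X_i] \geq \chi$, and since the $\Lambda_i$ are i.i.d.\ the $X_i$ are i.i.d.\ as well. A standard Hoeffding bound then yields
$$
\sum_{i=1}^k X_i \geq \frac{k \chi}{2}
$$
except with probability $\exp(-\Omega(k)) = \exp(-\Omega(n^{\gamma_k}))$.

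Next I would intersect this with the accuracy event of Proposition~\ref{lemma:ucalaccurate}. On that event, whenever $X_i = 1$ we have $\ucal_i \in [\minuu - \Delta_U, \maxuu + \Delta_U)$, so the binning step assigns $i$ to some $\bin_j$ with $j \geq 1$, i.e.\ $i \in \bin_{>0}$. Hence, on the joint event,
$$
|\bin_{>0}| \;\geq\; \sum_{i=1}^k X_i \;\geq\; \frac{k \chi}{2}.
$$
The pigeonhole principle applied to the $N_U$ nonempty bins then produces some $j^* \in \{1,\ldots,N_U\}$ with
$$
|\bin_{j^*}| \;\geq\; \frac{k \chi}{2 N_U} \;\geq\; \frac{k \chi}{6 N_U},
$$
so $\bin_{j^*}$ is abundant.

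Finally I would collect failure probabilities by a union bound: the Hoeffding failure contributes $\exp(-\Omega(n^{\gamma_k}))$ and Proposition~\ref{lemma:ucalaccurate} contributes $\exp(-\Omega(n/\log^2 n))$, so taking $\gamma_\delta > 0$ strictly smaller than both $\gamma_k$ and $1$ gives the stated bound $\exp(-\Omega(n^{\gamma_\delta}))$. There is no real obstacle here; the only care needed is in coordinating the two failure events and checking that the slack between $k\chi/(2N_U)$ and the abundance threshold $k\chi/(6N_U)$ absorbs the constants hidden in Proposition~\ref{prop:extreme2} and in the Hoeffding deviation.
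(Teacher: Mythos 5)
Your proof is correct, and it reaches the conclusion by a slightly different decomposition than the paper. The paper introduces \emph{fictitious bins} $B_j$ defined by where the conditional means $U_\Upsilon(\Lambda_i)$ fall, pigeonholes the probability mass from Proposition~\ref{prop:extreme2} to find a single fictitious bin $B_{j^{**}}$ with mass at least $\chi/N_U$, concentrates $|B_{j^{**}}|$ around $k\chi/N_U$ via Azuma--Hoeffding, and then observes that on the event~(\ref{eq:ucalaccurate}) each site of $B_{j^{**}}$ lands in one of the three actual bins $\bin_{j^{**}-1},\bin_{j^{**}},\bin_{j^{**}+1}$, whence one of these holds a third of $k\chi/(2N_U)$ --- this is where the factor $6$ comes from. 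You instead concentrate the \emph{aggregate} count of sites whose conditional mean lies in $[\minuu,\maxuu]$ (a sum of i.i.d.\ indicators with mean at least $\chi$), transfer it to $|\bin_{>0}|$ via the accuracy event, and pigeonhole the actual bins only at the very end. Your route is a bit cleaner: it avoids the fictitious-bin bookkeeping and the three-way split, yields the stronger count $k\chi/(2N_U)$ (showing the threshold $k\chi/(6N_U)$ has slack, as you note), and your Hoeffding deviation is at constant scale rather than scale $1/N_U$, giving failure probability $\exp(-\Omega(n^{\gamma_k}))$ versus the paper's $\exp(-\Omega(n^{\gamma_k}/\log^2 n))$ --- immaterial here, but tidier. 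What the paper's version buys is a localized statement (the abundant bin sits next to a specific fictitious bin), which turns out not to be needed downstream since the algorithm takes an arbitrary abundant bin. The only loose end in your write-up is the half-open boundary of the top bin (a site with $\ucal_i$ exactly equal to $\maxuu+\Delta_U$ would fall in $\bin_0$), which costs at most an inconsequential adjustment.
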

\begin{proof}
For the analysis, we introduce {\em fictitious bins} for 
the (unknown) expected $\ucal$-values.
That is, for $i = 1,\ldots, k$,
we let $i \in B_j$ if 
$$
U_\Upsilon(\Lambda_i) 
\in  
[
\minuu-\Delta_U + (j-1) \Delta_U,
\minuu-\Delta_U + j \Delta_U
),
$$
for some $j \in \{2,\ldots,N_U-1\}$, or $i \in B_0$
otherwise.

Then, there is $j^{**} \in \{2,\ldots,N_U-1\}$ such that
\begin{equation}\label{eq:fictitious}
\P\left[
U_\Upsilon(\Lambda) 
\in  
[
\minuu-\Delta_U + (j^{**}-1) \Delta_U,
\minuu-\Delta_U + j^{**} \Delta_U
)
\right]
\geq \frac{\chi}{N_U},
\end{equation}
that is, the probability that a site falls into
bin $B_{j^{**}}$ is at least $\chi/N_U$.
This
follows immediately from Proposition~\ref{prop:extreme2}
and the fact that the bins are disjoint and cover
the interval $[\minuu,\maxuu]$.

From Lemma~\ref{lemma:azuma}
and \eqref{eq:fictitious},
$$
\P\left[
\left|B_{j^{**}}\right|
\leq k\frac{\chi}{2 N_U}
\right]
\leq 2 \exp\left(
- \frac{\left(k\chi/2 N_U\right)^2}{2 k}
\right)
= \exp\left(-\Omega(n^{\gamma_k}/\log^2 n)\right).
$$
Therefore, 
if (\ref{eq:ucalaccurate}) holds, one of 
$\bin_{j^{**}-1}$, $\bin_{j^{**}}$, or
$\bin_{j^{**}+1}$ must contain
at least a third of the sites in $B_{j^{**}}$.
This occurs with probability at least
$1 - \exp(-\Omega(n^{\gamma_\delta}))$
for some $\gamma_\delta > 0$.
\end{proof}

\subsection{Estimating a distorted metric}

\paragraph{Estimating evolutionary distances}
We use an abundant bin to estimate evolutionary distances.
\begin{itemize}
\item \textbf{(Abundant bin)} Let $\bin^{*}$ be any bin with
at least $k \frac{\chi}{6 N_U}$ sites and
set 
$$k^* = \left|\bin^*\right|$$
and
$$
\lambda^* = \lambda_j,
$$
where $j$ is the index of $\bin^*$,
that is, $\lambda^*$ is the midpoint
of $\bin^*$.

\item \textbf{(Evolutionary distances)} For all $a \neq b \in L$,
compute
\begin{displaymath}
\hat q^*(a,b) = \frac{1}{k^*}
\sum_{i\in \bin^*}
p_\infty^{-1}\left[\ind\{\sigma^i_a = \sigma^i_b\}
- q_\infty\right].
\end{displaymath}
 
\end{itemize}
We prove that the $\hat q^*(a,b)$ is a 
good approximation of $e^{-\lambda^* \dist(a,b)}$.
\begin{proposition}[Accuracy]\label{lemma:accuracy}
Let $\gamma_\Lambda > 0, \gamma_q < \gamma_k/2$ be fixed constants.
There is a $\gamma_\delta > 0$ such that
the following hold
except with probability $\exp(- \Omega(n^{\gamma_\delta}))$:
\begin{enumerate}
\item There is at least one abundant bin.
Let $\bin^*$ be an arbitrary such bin.

\item And, for each $i \in \bin^*$
and for all $a\neq b \in L$,
\begin{equation}\label{eq:accuracy}
\left|
\hat q^*(a,b) - e^{-\lambda^* \dist(a,b)}
\right|
\leq 
\frac{1}{n^{\gamma_q}}
+ e^{-\lambda^* d(a,b)} 
\left(
e^{\frac{\gamma_\Lambda\dist(a,b)}{\log n}}
-1\right).
\end{equation}
\end{enumerate}
\end{proposition}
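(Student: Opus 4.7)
The plan is to combine the three structural inputs already proved---concentration of the $\ucal_i$'s (Proposition~\ref{lemma:ucalaccurate}), existence of an abundant bin (Proposition~\ref{lemma:abundant}), and tight bin variation (Proposition~\ref{lemma:variation})---with one additional Azuma--Hoeffding bound on the empirical correlations. The first conclusion is immediate from Proposition~\ref{lemma:abundant}: outside an event of probability $\exp(-\Omega(n^{\gamma_\delta}))$ there is an abundant bin $\bin^*$ of size $k^* \geq k\chi/(6N_U) = \Omega(n^{\gamma_k}/\log n)$. Throughout I would union-bound at the end over the $N_U = O(\log n)$ possible identities of $\bin^*$, so that for the quantitative estimate I may fix which bin is playing that role.

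For the bound (\ref{eq:accuracy}) I would use the triangle decomposition
$$\hat q^*(a,b) - e^{-\lambda^* \dist(a,b)} = \Bigl[\hat q^*(a,b) - \frac{1}{k^*}\sum_{i \in \bin^*} e^{-\Lambda_i \dist(a,b)}\Bigr] + \Bigl[\frac{1}{k^*}\sum_{i \in \bin^*} e^{-\Lambda_i \dist(a,b)} - e^{-\lambda^* \dist(a,b)}\Bigr].$$
The deterministic bracket is handled by Proposition~\ref{lemma:variation}: choosing $\gamma_U$ small enough so that $|\Lambda_i - \lambda^*| \leq \gamma_\Lambda/\log n$ for every $i \in \bin^*$, a case split on the sign of $\Lambda_i - \lambda^*$ combined with the inequality $1 - e^{-x} \leq e^x - 1$ for $x\geq 0$ yields the termwise estimate $|e^{-\Lambda_i \dist(a,b)} - e^{-\lambda^* \dist(a,b)}| \leq e^{-\lambda^* \dist(a,b)}\bigl(e^{\gamma_\Lambda \dist(a,b)/\log n} - 1\bigr)$, which survives averaging over $i \in \bin^*$. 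The stochastic bracket is bounded by Azuma--Hoeffding conditionally on $(\Lambda_i)_{i=1}^k$: given these, the leaf samples are independent across sites, and each $p_\infty^{-1}[\ind\{\sigma_a^i = \sigma_b^i\} - q_\infty]$ is bounded with conditional mean $e^{-\Lambda_i \dist(a,b)}$, so that a deviation of $1/n^{\gamma_q}$ of their $k^*$-fold average fails with probability at most $\exp(-\Omega(n^{\gamma_k - 2\gamma_q}/\log n))$. The hypothesis $\gamma_q < \gamma_k/2$ makes this stretched-exponentially small, and a union bound over the $O(n^2)$ pairs $(a,b)$ and over the $N_U$ candidate bins is absorbed into a single $\exp(-\Omega(n^{\gamma_\delta}))$ factor after readjusting $\gamma_\delta$.

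The main technical obstacle is the data-dependence of the bin structure: both the indicator $\ind\{\sigma_a^i = \sigma_b^i\}$ and the event $\{i \in \bin^*\}$ are measurable functions of the same leaf samples, and $\bin^*$ itself is selected from the data. I would handle this via the union bound over the $N_U$ deterministic choices of \emph{which} bin index plays the role of $\bin^*$; within each such choice, conditioning on $(\Lambda_i)_{i=1}^k$ leaves the restricted empirical sum as a function of conditionally independent leaf samples, and the bin-membership indicators are themselves bounded functions of these independent coordinates, so Azuma applies cleanly site-by-site. The bookkeeping to match the three exceptional probabilities (from Propositions~\ref{lemma:ucalaccurate}, \ref{lemma:abundant}, \ref{lemma:variation}) with the new concentration event, and to choose $\gamma_U$ small enough for the bin-variation step without losing the $1/\log n$ scale of the deterministic bracket, is the remaining delicate point.
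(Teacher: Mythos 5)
Your proposal is correct and follows essentially the same route as the paper: the paper also splits $\hat q^*(a,b)-e^{-\lambda^*\dist(a,b)}$ into an empirical-versus-conditional-mean term controlled by Azuma--Hoeffding with $\zeta = n^{-\gamma_q}$ (giving the same $\exp(-\Omega(n^{\gamma_k-2\gamma_q}/\log n))$ failure probability, then union-bounded over pairs) and a deterministic drift term controlled by the bin-variation bound $|\Lambda_i-\lambda^*|\leq \gamma_\Lambda/\log n$ via the same exponential inequality. The only difference is presentational: the paper isolates the concentration step as an auxiliary lemma stated for a fixed index set and fixed scaling factors, thereby sidestepping the data-dependence of $\bin^*$ that you discuss explicitly, whereas you address it head-on with the union bound over the $N_U$ candidate bins.
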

\begin{proof}
The result follows from
Propositions~\ref{lemma:ucalaccurate}, 
~\ref{prop:boundsscaling}, 
~\ref{lemma:variation} and \ref{lemma:abundant},
and the following lemma.
\begin{lemma}
Let $\gamma_\Lambda > 0$, $\gamma_q < \gamma_k/2$ be fixed constants.
Let 
$$
\tilde k \geq k\frac{\chi}{6 N_U},
$$
and
$$
\frac{fg}{M^2}
\leq 
\tilde{\lambda}, \tilde{\lambda}_{1}, \ldots,
\tilde{\lambda}_{\tilde k}
\leq \frac{2M}{f (1 - e^{-5g})}
$$ 
be such that,
for all $i$,
\begin{equation}\label{eq:tildelambda}
\left|\tilde{\lambda}_{i} 
- \tilde{\lambda}\right|
\leq \frac{\gamma_\Lambda}{\log n}.
\end{equation}
Let $\tilde{\sigma}^i_L \sim \overline{\dcal}[T, \tilde{\lambda}_i, r]$ independently for all $i$. Then, for all $a\neq b \in L$,
\begin{equation*}
\left|
\frac{1}{\tilde{k}}
\sum_{i = 1}^{\tilde{k}}
p_\infty^{-1}\left[\ind\{\tilde{\sigma}^i_a = \tilde{\sigma}^i_b\}
- q_\infty\right] - e^{-\tilde{\lambda} d(a,b)}
\right|
\leq
\frac{1}{n^{\gamma_q}}
+ e^{-\tilde{\lambda} d(a,b)} 
\left(
e^{\frac{\gamma_\Lambda\dist(a,b)}{\log n}}
-1\right).
\end{equation*}
except with probability $\exp\left(- \Omega(n^{\gamma_k-2\gamma_q}/\log n)\right)$. 
\end{lemma}
\begin{proof}
In Lemma~\ref{lemma:azuma}, take
$m = \tilde{k} = \Omega(n^{\gamma_k}/\log n)$, $t = \frac{1}{\tilde{k}}$, and
$\zeta = \frac{1}{n^{\gamma_q}}$. Then,
except with probability
$2 n^2 \exp\left(- \Omega(n^{\gamma_k-2\gamma_q}/\log n)\right)$,
\begin{equation*}
\left|
\frac{1}{\tilde{k}}
\sum_{i = 1}^{\tilde{k}}
p_\infty^{-1}\left[\ind\{\tilde{\sigma}^i_a = \tilde{\sigma}^i_b\}
- q_\infty\right] - 
\frac{1}{\tilde{k}}
\sum_{i = 1}^{\tilde{k}}
e^{- \tilde{\lambda}_i \dist(a,b)}\right|
\leq
\frac{1}{n^{\gamma_q}},
\end{equation*}
for all $a\neq b \in L$.
Moreover, by \eqref{eq:tildelambda},
\begin{eqnarray*}
\left|
e^{-\tilde{\lambda} d(a,b)} - 
\frac{1}{\tilde{k}}
\sum_{i = 1}^{\tilde{k}}
e^{- \tilde{\lambda}_i \dist(a,b)}\right|
&\leq& 
e^{-\tilde{\lambda} d(a,b)} 
\left|
1 - 
\frac{1}{\tilde{k}}
\sum_{i = 1}^{\tilde{k}}
e^{|\tilde{\lambda} - \tilde{\lambda}_i|\dist(a,b)}\right|
\\
&\leq& 
e^{-\tilde{\lambda} d(a,b)} 
\left|
1 - 
e^{\frac{\gamma_\Lambda\dist(a,b)}{\log n}}\right|.
\end{eqnarray*}
\end{proof}
\end{proof}

\paragraph{Tree construction}
To reconstruct the tree, 
we use a distance-based method
of~\cite{DaMoRo:09}. We require the following
definition.
\begin{definition}[Distorted metric~\cite{Mossel:07,KiZhZh:03}]\label{def:distorted metric}
Let $T = (V,E;L,\dist)$ be a phylogeny and let $\tau, \Psi > 0$. 
We say that $\distd : L\times L \to (0,+\infty]$ is a $(\tau, \Psi)$-\emph{distorted metric}
for $T$ or a $(\tau, \Psi)$-\emph{distortion} of $\dist$ if:
\begin{enumerate}
\item \itemname{Symmetry}
For all $u,v \in L$, $\distd$ is symmetric, that is,
\begin{equation*}
\distd(u,v) = \distd(v,u);
\end{equation*}
\item \itemname{Distortion} $\distd$ is accurate on
``short'' distances, that is, for all $u,v \in L$, if either 
$\dist(u,v) < \Psi + \tau$ or $\distd(u,v) < \Psi + \tau$ then
\begin{equation*}
\left|\dist(u,v) - \distd(u,v)\right| < \tau.
\end{equation*}
\end{enumerate}
\end{definition}
An immediate consequence of~\cite[Theorem 1]{DaMoRo:09}
is the following.
\begin{theorem}[See~\cite{DaMoRo:09}.]\label{thm:dmr}
Let $T = (V,E;L,\dist)$ be a phylogeny 
with $n$ leaves in
$\tcal_{f,g}$. Then topology of $T$ can be recovered
in polynomial time from a 
$(\tau,\Psi)$-distortion $\distd$ of $\dist$ as long as
$$
\tau \leq \frac{f}{5},
$$
and
$$
\Psi \geq 5 g \log n.
$$
(The constants above are not optimal
but will suffice for our purposes.)
\end{theorem}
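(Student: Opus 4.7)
The plan is to reconstruct $\tcal[T]$ by applying the four-point method only to quartets whose pairwise distorted distances are all below $\Psi$, so that only reliable distance estimates are used. For any four leaves with true quartet topology $ab|cd$, the four-point sums satisfy
\begin{equation*}
\dist(a,b) + \dist(c,d) + 2f
\leq \dist(a,c) + \dist(b,d)
= \dist(a,d) + \dist(b,c),
\end{equation*}
since the internal edge of any quartet in $\tcal_{f,g}$ has weight at least $f$. If all six pairwise distances in a quartet are bounded by $\Psi + \tau$, each four-point sum is within $2\tau$ of its $\distd$-analogue, so the gap $2f$ between the smallest sum and the other two is preserved whenever $4\tau < 2f$, which is strictly implied by $\tau \leq f/5$. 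Consequently the four-point test applied to $\distd$ correctly resolves every such short quartet.

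First, I would enumerate all quadruples $\{a,b,c,d\}$ with $\max_{x\neq y}\distd(x,y) \leq \Psi$, apply the four-point test to each, and collect the resulting topologies; by the preceding paragraph, each is the true quartet topology. Then I would invoke a polynomial-time quartet-assembly procedure (a short-quartet or dyadic-closure method, or the construction used in \cite{DaMoRo:09}) to stitch these resolved quartets into a single tree topology. The assembly reconstructs $\tcal[T]$ provided every internal edge of $T$ appears as the central edge of at least one resolved quartet.

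The bound $\Psi \geq 5g\log n$ is what guarantees this coverage. For any internal edge $e$ with endpoints $u,v$, deleting $e$ produces four rooted subtrees (two at $u$ and two at $v$); each such subtree with $m$ leaves has exactly $m-1$ internal nodes, so a pigeonhole argument shows it contains a leaf at graph-distance at most $\lceil\log_2 m\rceil \leq \lceil\log_2 n\rceil$ from its root. Selecting one such close leaf in each of the four subtrees yields a quartet whose central edge is $e$ and whose diameter is at most $(2\lceil\log_2 n\rceil + 1)g \leq 5g\log n$, safely within the reliable range of $\distd$. The main obstacle is the assembly step itself: from a correct but combinatorially constrained collection of short quartets (without any quartet information on leaves at large evolutionary distance), one must reconstruct the full tree in polynomial time, despite quartet compatibility being NP-hard in general; this is precisely the algorithmic contribution of \cite{DaMoRo:09}. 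Granted that assembly procedure, the four-point analysis and the coverage argument above close the proof.
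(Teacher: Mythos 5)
The paper offers no proof of this theorem: it is stated as an immediate consequence of Theorem~1 of \cite{DaMoRo:09}, and the reader is referred there for the algorithm. Your proposal is consistent with that, and the supporting details you supply are correct: the four-point test on a quartet of $\tcal_{f,g}$ is robust to additive error $\tau$ on each pairwise distance whenever $4\tau < 2f$ (so $\tau \leq f/5$ suffices with room to spare), and the binary-branching pigeonhole argument shows every internal edge is the middle edge of a quartet of true diameter $O(g\log n) \leq \Psi$, so the short quartets determine the topology. The one genuinely hard step---assembling the correctly resolved short quartets into the full tree in polynomial time, with no information on distant pairs---you defer to \cite{DaMoRo:09}, which is exactly what the paper does, so there is no divergence in approach. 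One cosmetic slack issue: you filter quartets by $\distd \leq \Psi$, but the distortion guarantee only gives $\distd < \dist + \tau \leq \Psi + \tau$ for the covering quartets you construct, so the threshold should be $\Psi + \tau$ (or the diameter bound tightened to $\Psi - \tau$, which your $3g\log n$-ish estimate easily permits).
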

See~\cite{DaMoRo:09} for the details
of the reconstruction algorithm.

We now show how to obtain a
$(f/5,5g\log n)$-distortion with high 
probability.
\begin{proposition}[Distortion estimation]\label{lemma:distortion}
There are $\gamma_U, \gamma_\Lambda, \gamma_q,
\gamma_k > 0$
so that, given that the conclusions
of Proposition~\ref{lemma:accuracy} hold,
then
$$
\distd(a,b) = -\ln \left( \hat{q}^*(a,b)_+ \right),
\qquad (a,b) \in L\times L,
$$
is a $(\lambda^*f/5,5\lambda^*g\log n)$-distortion of
$\lambda^*\dist$.
\end{proposition}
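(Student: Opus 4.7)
The plan is to verify the two conditions of Definition~\ref{def:distorted metric} for $\distd$ as a $(\tau, \Psi)$-distortion of $\lambda^* \dist$ with $\tau = \lambda^* f/5$ and $\Psi = 5 \lambda^* g \log n$. Symmetry is immediate from the symmetry of $\hat q^*(a,b)$ in $a,b$. For the distortion property, I will turn the additive estimate of Proposition~\ref{lemma:accuracy},
\begin{equation*}
\left|\hat q^*(a,b) - e^{-\lambda^* d(a,b)}\right| \leq \frac{1}{n^{\gamma_q}} + e^{-\lambda^* d(a,b)}\left(e^{\gamma_\Lambda d(a,b)/\log n} - 1\right),
\end{equation*}
into a \emph{multiplicative} estimate $\hat q^*(a,b) = e^{-\lambda^* d(a,b)}(1 + \varepsilon_n)$ with $|\varepsilon_n|$ small, and then take $-\ln$ to obtain the desired additive control on $\distd(a,b) - \lambda^* \dist(a,b)$.

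The core computation is that, whenever $\dist(a,b) = O(\log n)$, the second error term equals $e^{-\lambda^* d(a,b)}$ times $e^{\gamma_\Lambda d(a,b)/\log n} - 1$, which is bounded by a small constant multiple of $\gamma_\Lambda$ uniformly in $n$; while the first error term $1/n^{\gamma_q}$ is a small fraction of $e^{-\lambda^* d(a,b)} \geq n^{-C \maxlambda g}$ provided $\gamma_q$ exceeds $C \maxlambda g$, where $\maxlambda$ is the a priori upper bound on $\lambda^*$ inherited from Propositions~\ref{prop:boundsscaling} and~\ref{lemma:variation}. Taking $-\ln$ of $\hat q^*(a,b) = e^{-\lambda^* d(a,b)}(1 + \varepsilon_n)$ then yields $|\distd(a,b) - \lambda^* \dist(a,b)| \leq 2|\varepsilon_n|$, which falls below $\tau = \lambda^* f/5 \geq \minlambda f/5$ once $\gamma_\Lambda$ is chosen small enough and $n$ is large.

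The distortion clause must be checked whenever \emph{either} $\lambda^* \dist(a,b) < \Psi + \tau$ or $\distd(a,b) < \Psi + \tau$. The first case is immediate: $\dist(a,b) \leq 5g \log n + f/5 = O(\log n)$, and the reduction above applies. The second case is subtler: from $\hat q^*(a,b)_+ > n^{-5\lambda^* g}e^{-\lambda^* f/5}$ I need to deduce $\dist(a,b) = O(\log n)$ as well. Writing $\dist(a,b) = c \log n$, the upper half of Proposition~\ref{lemma:accuracy} gives $\hat q^*(a,b) \leq n^{-c\lambda^*} + n^{-\gamma_q} + n^{-c(\lambda^* - \gamma_\Lambda)}$; choosing $\gamma_\Lambda \leq \minlambda/2$ and $\gamma_q$ large, this contradicts the lower bound once $c$ exceeds a constant multiple of $g$, after which the first-case reasoning applies.

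The main obstacle is choreographing the four constants $\gamma_U, \gamma_\Lambda, \gamma_q, \gamma_k$ in the right order. My ordering is: first fix $\gamma_U$ via Proposition~\ref{prop:boundsscaling} to secure the bounds $\minlambda \leq \lambda^* \leq \maxlambda$ depending only on $f,g,M$; next pick $\gamma_\Lambda$ small enough (as a function of $\maxlambda, f, g$) so that the within-bin distortion $e^{\gamma_\Lambda d(a,b)/\log n} - 1$ is a sufficiently small fraction on $\dist(a,b) = O(\log n)$, and in particular $\gamma_\Lambda \leq \minlambda/2$; then choose $\gamma_q$ larger than the dominant exponent appearing in both case analyses; finally take $\gamma_k > 2 \gamma_q$ so that Proposition~\ref{lemma:accuracy}, whose hypothesis is $\gamma_q < \gamma_k/2$, delivers the required additive accuracy. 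With these choices, both cases reduce to the multiplicative estimate and the distortion bound follows.
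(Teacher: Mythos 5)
Your proposal is correct and follows essentially the same route as the paper: on pairs at distance $O(\log n)$ you convert the additive bound of Proposition~\ref{lemma:accuracy} into a multiplicative one and take logarithms, and for the remaining pairs you show (in contrapositive form, where the paper uses explicit thresholds $12g\log n$ and $15g\log n$) that $\distd(a,b) \geq \Psi + \tau$ so the distortion clause is vacuous. The only cosmetic difference is the ordering of constants: since Proposition~\ref{lemma:variation} chooses $\gamma_U$ as a function of $\gamma_\Lambda$, one should fix $\gamma_\Lambda$ first and then shrink $\gamma_U$ (which only improves the bounds of Proposition~\ref{prop:boundsscaling}), exactly as the paper's ``$\gamma_\Lambda$ (and hence $\gamma_U$) small enough'' indicates.
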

\begin{proof}
Define
$$
\short = \{(a,b) \in L \times L\,:\,
\dist(a,b) \leq 15 g \log n\},
$$
and
$$
\notshort = \{(a,b) \in L \times L\,:\,
\dist(a,b) > 12 g \log n\},
$$

Let $(a,b)\in\short$. Note that
$$
e^{-\lambda^* \dist(a,b)}
\geq \exp\left(-\left(
\frac{2M}{f(1-e^{-5g})}
\right)
15 g \log n\right)
\equiv \frac{1}{n^{\gamma_q'}},
$$
where the last equality is a definition.
Then, taking $\gamma_q$ (and hence $\gamma_k$) large enough
and $\gamma_\Lambda$ (and hence $\gamma_U$) small enough, from~\eqref{eq:accuracy} we have
\begin{equation*}
\left|
\distd(a,b)
- \lambda^* \dist(a,b)
\right|
\leq
\left(\frac{fg}{M^2}\right)\frac{f}{5} 
\leq
\frac{\lambda^*f}{5}.
\end{equation*}

Similarly,
let $(a,b)\in\notshort$. Note that
$$
e^{-\lambda^* \dist(a,b)}
< \exp\left(-\left(
\frac{fg}{M^2}
\right)
12 g \log n\right)
\equiv \frac{1}{n^{\gamma_q''}},
$$
where the last equality is a definition.
Then, taking $\gamma_q$ large enough
and $\gamma_\Lambda$ small enough,
from~\eqref{eq:accuracy} we have
\begin{equation*}
\distd(a,b)
\geq
6 \lambda^* g \log n
\geq 
5\lambda^*g \log n + \frac{\lambda^*f}{5}. 
\end{equation*}
\end{proof}

We finally state our main tree-construction result.
\begin{proposition}[Tree reconstruction]
\label{prop:treeconstruction}
Under Assumption~\ref{assump:general},
given a $\gamma_s$-sparse $\Upsilon$
there is a $\gamma_k>0$ large enough
so that the topology of the tree
can be reconstructed in polynomial
time using $k = n^{\gamma_k}$
samples, except with probability
$\exp(-\Omega(n^{\gamma_\delta}))$
for some $\gamma_\delta > 0$.
\end{proposition}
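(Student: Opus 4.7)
The plan is to simply chain together the preparatory results of this section. Given the $\gamma_s$-sparse set $\Upsilon$, I would first compute the clustering statistic $\ucal_i$ for each of the $k = n^{\gamma_k}$ sites and run the site binning procedure with bins of width $\Delta_U = \gamma_U/\log n$. By Proposition~\ref{lemma:abundant}, with probability at least $1 - \exp(-\Omega(n^{\gamma_\delta}))$ there is an abundant bin $\bin^*$ of size $k^* = \Omega(n^{\gamma_k}/\log n)$; by Propositions~\ref{prop:boundsscaling} and~\ref{lemma:variation}, every site in $\bin^*$ has scaling factor within $\gamma_\Lambda/\log n$ of a common value $\lambda^*$ lying in the bounded interval $[fg/M^2,\, 2M/(f(1-e^{-5g}))]$.

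Next I would compute $\hat q^*(a,b)$ for every leaf pair using the sites of $\bin^*$ and form the candidate metric
\[
\distd(a,b) = -\ln\bigl(\hat q^*(a,b)_+\bigr).
\]
By Proposition~\ref{lemma:distortion}, for suitably chosen constants $\gamma_U, \gamma_\Lambda, \gamma_q, \gamma_k$, this is a $(\lambda^* f/5,\, 5\lambda^* g \log n)$-distortion of the rescaled tree metric $\lambda^*\dist$ associated to the phylogeny $\lambda^* T \in \tcal_{\lambda^* f,\lambda^* g}$. Since $\lambda^* T$ has the same topology as $T$ and since $\lambda^* f,\lambda^* g$ are bounded away from $0$ and $\infty$ by constants depending only on $f,g,M$, Theorem~\ref{thm:dmr} (applied with $\tau = \lambda^* f/5$ and $\Psi = 5\lambda^* g \log n$) then recovers $\tcal[T]$ in polynomial time.

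For the failure probability, I would take a union bound over the bad events of Propositions~\ref{claim:concentration},~\ref{lemma:abundant}, and~\ref{lemma:accuracy}. Each of these has probability $\exp(-\Omega(n^{\gamma'}))$ for some positive exponent $\gamma'$ (the smallest being of order $n^{\gamma_k - 2\gamma_q}/\log n$ from the accuracy estimate), so the overall bound $\exp(-\Omega(n^{\gamma_\delta}))$ follows with $\gamma_\delta$ chosen smaller than all these exponents.

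The main technical point — and what really justifies the choice of constants — is making the four parameters $\gamma_U, \gamma_\Lambda, \gamma_q, \gamma_k$ simultaneously compatible: $\gamma_U$ must be small enough so that each bin narrows the scaling factor to within $O(1/\log n)$ of its midpoint (Proposition~\ref{lemma:variation}); once $\gamma_U$ is fixed, $\gamma_\Lambda$ must be small enough that the multiplicative error $e^{-\lambda^* d}\bigl(e^{\gamma_\Lambda d/\log n}-1\bigr)$ stays below $\lambda^* f/10$ on the ``short'' range $d \leq 15 g\log n$; $\gamma_q$ must be large enough that the additive error $n^{-\gamma_q}$ is negligible both on short pairs (to fit inside the distortion tolerance $\lambda^* f/5$) and long pairs (to keep $\distd \geq 5\lambda^* g\log n$); and finally $\gamma_k > 2\gamma_q$ to ensure that Proposition~\ref{lemma:accuracy} fails only with stretched-exponentially small probability. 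All these constraints are consistent, and since every step of the procedure is polynomial in $n$ and $k = n^{\gamma_k}$, the overall algorithm runs in polynomial time.
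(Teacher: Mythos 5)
Your proposal is correct and follows essentially the same route as the paper, whose proof of this proposition is simply the observation that it follows from Theorem~\ref{thm:dmr} combined with Proposition~\ref{lemma:distortion} (which in turn already packages the binning, abundance, and accuracy steps you spell out). Your more detailed accounting of how $\gamma_U, \gamma_\Lambda, \gamma_q, \gamma_k$ must be chosen compatibly, and of the union bound over failure events, is a faithful unpacking of what the cited propositions establish rather than a different argument.
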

\begin{proof}
The result follows from
Theorem~\ref{thm:dmr}
and Proposition~\ref{lemma:distortion}.
\end{proof}
Combining Propositions~\ref{proposition:clusteringstatistic}
and~\ref{prop:treeconstruction},
we get Theorem~\ref{thm:2}.

\section{Concluding remarks}

Using techniques from the recent unpublished
manuscript~\cite{MosselRoch:11b},
our results can be extended to handle
the more general GTR model of molecular 
evolution which allows $Q$-matrices to be time-reversible. 
This generalization involves choosing pairs of leaves
that are not only connected by edge-disjoint 
paths, but also far enough from each other.
One can then use mixing arguments to derive the
independence properties required for concentration
of the site clustering statistic.
We leave out the details.

\clearpage

\bibliographystyle{alpha}
\bibliography{thesis}

\end{document}